\documentclass[11pt]{article}

\usepackage{comment}
\usepackage{subcaption}
\usepackage{amsmath,amssymb,amsthm}
\usepackage{amsbsy}
\usepackage{enumerate,verbatim}
\usepackage{graphicx}
\usepackage{diagbox}
\usepackage{mathdots}
\usepackage{longtable}
\usepackage{enumitem}
\usepackage{algorithm,float}
\usepackage{algorithmic}
\usepackage{caption}
\usepackage[a4paper,left=36mm,right=36mm,top=15mm,bottom=15mm]{geometry}
\usepackage{bm}
\usepackage{latexsym}
\usepackage{amsfonts}
\usepackage{authblk}       
\usepackage[numbers,sort&compress]{natbib}
\usepackage{multirow}
\usepackage{titlesec}
\usepackage{indentfirst}
\usepackage[colorlinks=true,linkcolor=blue,anchorcolor=blue,citecolor=blue]{hyperref}
\usepackage{setspace}
\usepackage{rotating}
\usepackage{multirow}
\usepackage{booktabs}
\usepackage{siunitx}
\usepackage{cleveref}
\newtheorem{theorem}{Theorem}[section]
\newtheorem{lemma}{Lemma}[section]
\newtheorem{example}{Example}[section]
\newtheorem{problem}{Problem}[section]

\makeatletter
\newtheorem{definition}{Definition}

{\par\vspace{\abovedisplayskip}\noindent
	\textbf{{\small Funding}}\hspace{0.5em}\ignorespaces}
{\par\vspace{\belowdisplayskip}}
\makeatother

\newenvironment{ethics}
{\par\vspace{\abovedisplayskip}\noindent
	\textbf{{\small Ethics approval}}\hspace{0.5em}\ignorespaces}
{\par\vspace{\belowdisplayskip}}
\makeatother

\newenvironment{consent}
{\par\vspace{\abovedisplayskip}\noindent
	\textbf{{\small Consent for publication}}\hspace{0.5em}\ignorespaces}
{\par\vspace{\belowdisplayskip}}
\makeatother

\newenvironment{materials}
{\par\vspace{\abovedisplayskip}\noindent
	\textbf{{\small Materials availability}}\hspace{0.5em}\ignorespaces}
{\par\vspace{\belowdisplayskip}}
\makeatother

\newenvironment{code}
{\par\vspace{\abovedisplayskip}\noindent
	\textbf{{\small Code availability}}\hspace{0.5em}\ignorespaces}
{\par\vspace{\belowdisplayskip}}
\makeatother

\newenvironment{data}
{\par\vspace{\abovedisplayskip}\noindent
	\textbf{{\small Data Availability}}\hspace{0.5em}\ignorespaces}
{\par\vspace{\belowdisplayskip}}
\makeatother

\newenvironment{competing}
{\par\vspace{\abovedisplayskip}\noindent
	\textbf{{\small Competing interests}}\hspace{0.5em}\ignorespaces}
{\par\vspace{\belowdisplayskip}}
\makeatother

\newenvironment{authorcon}
{\par\vspace{\abovedisplayskip}\noindent
	\textbf{{\small Author contribution}}\hspace{0.5em}\ignorespaces}
{\par\vspace{\belowdisplayskip}}
\makeatother

\newenvironment{constforpar}
{\par\vspace{\abovedisplayskip}\noindent
	\textbf{{\small Consent to participate}}\hspace{0.5em}\ignorespaces}
{\par\vspace{\belowdisplayskip}}
\makeatother

\numberwithin{equation}{section}
\titleformat{\section}{\fontsize{14pt}{16.8pt}\selectfont\bfseries}{\thesection}{1em}{}
\makeatletter
\newenvironment{breakablealgorithm}
{
		\begin{center}
			\refstepcounter{algorithm}
			\hrule height.8pt depth0pt \kern2pt
			\renewcommand{\caption}[2][\relax]{
				{\raggedright\textbf{\ALG@name~\thealgorithm} ##2\par}%
				\ifx\relax##1\relax 
				\addcontentsline{loa}{algorithm}{\protect\numberline{\thealgorithm}##2}%
				\else 
				\addcontentsline{loa}{algorithm}{\protect\numberline{\thealgorithm}##1}%
				\fi
				\kern2pt\hrule\kern2pt
			}
		}{
		\kern2pt\hrule\relax
	\end{center}
}
\makeatother

\makeatletter
\newcommand{\Rmnum}[1]{\expandafter\@slowromancap\romannumeral #1@}
\makeatother

\makeatletter
\def\ps@pprintTitle{}
\makeatother

\title{A Cayley-free Two-Step Algorithm  for Inverse  Singular Value Problems}
\author{ Jiechang Fan\thanks{School of Mathematical Sciences, Zhejiang Normal University, Jinhua 321004, P. R. China
 (1637886951@qq.com).}\hspace{2em}
 Weiping Shen\thanks{Corresponding author. School of Mathematical Sciences, Zhejiang Normal University,
 Jinhua 321004, P. R. China (shenweiping@zjnu.cn). This author's work was supported in part by the National Natural Science Foundation of China (grant 12071441)}\hspace{2em}
Yusong Luo\thanks{School of Mathematical Sciences, Zhejiang Normal University, Jinhua 321004,  P. R. China (luoyusong@zjnu.edu.cn).}
\hspace{2em}
Enping Lou\thanks{College of Physics and Electronic Information Engineering, Zhejiang Normal University, Jinhua 321004,  P. R. China (louenping@zjnu.cn).}

}

\date{}

\begin{document}
	\maketitle
	\begin{abstract}
		\noindent  In this paper, we investigate numerical solutions for inverse singular value problems (for short, ISVPs) arising in various applications.   Inspired by the methodologies employed for inverse eigenvalue problems,
we propose a Cayley-free two-step algorithm for solving the ISVP.  Compared to the existing two-step algorithms for the ISVP, our algorithm eliminates the need for Cayley transformations and consequently avoids solving $2(m+n)$  linear systems during the computation of approximate singular vectors at each outer iteration.  Under the assumption that the  Jacobian matrix
	at a solution is nonsingular, we present a convergence analysis for the proposed algorithm and prove a cubic root-convergence rate. Numerical experiments are conducted to validate the effectiveness of our algorithm.
		
		\vspace{1em} 
		\noindent \textbf{Keywords:}
		cubic root-convergence, Cayley transform, two-step algorithm, inverse singular value problem
		
		\vspace{1em} 
		\noindent \textbf{AMS subject classifications:}
		65F18, 65F15, 15A18
	\end{abstract}

	\section{Introduction}
	\noindent	The inverse singular value problems, which concerns  the reconstruction of matrices from prescribed singular value spectra,  have  received more and more attentions  due to its wide applications such as constructing the Toeplitz-related matrices with specified singular values, the passivity enforcement in nonlinear circuit simulation,
 the determination of mass distributions, orbital mechanics, irrigation theory, computed tomography  and so on.
 One may refer to \cite{isvpyy1,isvpyy2,isvpyy3,isvpyy4,isvpyy5,wjh2023} and the references therein for diverse applications of the ISVP. Let $m$, $n$ be positive integers and  let  $ \left\{A_i\right\}_{i=0}^n$ be a sequence of real $m\times n$ matrices.  Let $\bm{c}:=(c_1,c_2,\dots ,c_n)^T\in \mathbb{R}^n$ and define
	\begin{equation}\label{A}
		A(\bm{c}):=A_0+\sum_{i=1}^{n}c_iA_i.
	\end{equation}
	Let $ \left\{\sigma_i(\bm{c})\right\}_{i=1}^n $ be   the singular values of the matrix  $ A(\bm{c}) $ ordering with
	$	\sigma_1(\bm{c}) \geq \sigma_2(\bm{c}) \geq \cdots \geq \sigma_n(\bm{c}) \geq  0.$	
	Particularly, Chu originally proposed in \cite{newton dist} an important class of the ISVPs which  is described as follows. 	
	\begin{problem}[\textbf{ISVP}]\label{problem}
		
		Given $n$ nonnegative real numbers $\sigma_1^* \geq \sigma_2^* \geq \cdots \geq \sigma_n^*,$
		find a vector $\bm {c}^*\in \mathbb{R}^n$ such that
		\begin{equation}\label{iep}
			\sigma_i(\bm{c}^*)=\sigma_i^*,\quad 1\leq i \leq n. 	
		\end{equation}
	\end{problem}
	Our interest in the present paper is the topic of numerically solving the ISVP \eqref{iep} by iterative algorithm.  Let $\bm{f}: \mathbb{R}^n\rightarrow \mathbb{R}^n$ be defined by
	\begin{equation}\label{f}
		\bm{f}(\bm{c}):=(\sigma_1(\bm{c})-\sigma_1^*,\;\sigma_2(\bm{c})-\sigma_2^*,\;\dots,\;\sigma_n(\bm{c})-\sigma_n^*)^T.
	\end{equation} Then, as noted in \cite{ulm dist, shen ip 2020}, in the case when the given singular values $\{\sigma_{i}^{\ast}\}_{i=1}^n$ are distinct and positive, i.e.,
	\begin{equation}\label{dispos}
		\sigma_{1}^{\ast}>\sigma_{2}^{\ast}>\cdots>\sigma_{n}^{\ast}>0,
	\end{equation}
	the function $\bm{f}$ is analytic and the Jacobian matrix $\boldsymbol{f}'(\bm{c})$ of $\boldsymbol{f}$ at $\bm{c}\in\mathbb{R}^{n}$ is given by
	\begin{equation}\label{jacobi}
		[\boldsymbol{f}'(\bm{c})]_{ij} := \bm{u}_i(\bm{c})^T  A_j \bm{v}_i(\bm{c}),\quad 1 \leq i,\;j\leq n,
	\end{equation}
	where $U(\mathbf{c}):=[{\bf u}_1 ({\bf c}),\;\ldots$ ${\bf u}_m({\bf c})]$ and $V(\mathbf{c}):=[{\bf v}_1 ({\bf c}),\;\ldots,\;{\bf v}_n({\bf c})]$
	are two orthogonal matrices of the left   and right singular vectors of $A(\mathbf{c})$, respectively.
	Clearly,    when the   singular values $\{\sigma_{i}^{\ast}\}_{i=1}^n$  satisfy  \eqref{dispos},
	the classical Newton  method can be applied  (to the equation $\bm{f}(\bm{c})=0$)
	and exhibits local quadratic convergence toward a solution of  \eqref{iep}  under  the nonsingularity assumption of  the Jacobian matrix at a
	solution.  However, as  pointed out by \cite{shenAppl. Numer. Math.2016},
	the  Newton  method requires a complete
	singular value decomposition   at each outer iteration which reduces the algorithm's practical efficiency  when dealing with large-scale problems.

	Alternatively, solving \eqref{iep} is equivalent to finding a point in the intersection of the manifolds $\mathcal{A}$ and $\mathcal{M}(\Sigma^*)$, where
	$\Sigma^*:={\rm diag}(\sigma^{*}_{1},\;\ldots,\;\sigma^{*}_{n})\in\mathbb{R}^{m\times n}$, $\mathcal{A}:=\{A(\mathbf{c})\;|\;\mathbf{c}\in\mathbb{R}^n\}$ and
	$\mathcal{M}(\Sigma^*):=\{U\Sigma^*V^T\;|\;U,V\text{are orthogonal matrices}\}$.
	Based on this equivalence and making use of  the Cayley transform,  Chu designed in \cite{newton dist} a Newton-type method for solving the ISVP \eqref{iep} which computes approximate singular vectors instead of exact singular vectors  at each iteration.  However, the Newton-type method in \cite{newton dist} requires solving a Jacobian equation in each outer iteration, which can be costly for large-scale problems. To reduce computational cost, an inexact version of the Newton-type method was proposed in \cite{inexact newton} which solves the Jacobian equations approximately using a suitable stopping criterion. An alternative cost-reduction strategy is the Ulm-like method for the ISVP \eqref{iep} proposed by Vong et al. \cite{ulm dist} which   avoids solving  the (approximate) Jacobian equations. Recently, inspired by Aishima's  work  \cite{ashima iep}  for the inverse eigenvalue problems,
	Wei and Chen  \cite{ashima isvp}  developed an iterative  algorithm for the ISVP  \eqref{iep} which, compared with the (inexact) Newton-type methods and the Ulm-like method,   refined orthogonality without the Cayley transform in
	obtaining approximate singular vectors.
	
	Note that all the algorithms mentioned above are quadratically
	convergent. In order to speed up the convergence rate, several two-step algorithms have been developed for solving the ISVP   \eqref{iep}. Particularly,
	by executing the inexact Newton-type procedure  with shared Jacobian matrices, Ma and Chen
	proposed in \cite{2 inexact newton} a two-step inexact Newton-type  method
	which is at least super quadratically convergent in  root sense provided that the given singular values $\{\sigma_{i}^{\ast}\}_{i=1}^n$  are distinct and positive.
	Recently, by applying the two-step Ulm-Chebyshev iterative procedure (cf. \cite{ulmtype}) to the equation $\bm{f}(\bm{c})=0$,
	\cite{2 ulm dist} designed a two-step Ulm-Chebyshev-like method for solving the ISVP \eqref{iep}. As  described  in \cite{2 ulm dist}, the two-step Ulm-Chebyshev-like method eliminates the need to solve the (approximate) Jacobian equations and  achieves higher convergence rate‌: at least cubic convergence in root sense.
	It should be mentioned that the above two-step algorithms  obtain approximate singular vectors by $4$ Cayley transforms at each outer iteration.
	As remarked in \cite{ashima iep}, however, each Cayley transform requires solving $n$ (or $m$) linear systems and takes $O(n^3)$ (or $O(m^3)$) arithmetic operations to produce an orthogonal matrix from a skew-symmetric matrix.

	The purpose of the present paper is to propose a Cayley-free two-step algorithm for solving the ISVP   \eqref{iep}.
	Under the assumption that the  Jacobian matrix
	at a solution $\bm {c}^*$ is nonsingular, we prove that the proposed algorithm is at least cubic convergence in root sense.
	It should be noted that, compared with the existing two-step algorithms for solving the ISVP \eqref{iep}, the proposed algorithm requires no  Cayley transforms (and so $2(m+n)$ linear systems) in obtaining approximate singular vectors at each outer iteration and therefore results in higher computational efficiency.

	The paper is organized as follows. In Section \ref{section2}, we propose a  Cayley-free two-step algorithm for solving the ISVP\eqref{iep}.  The convergence analysis of the proposed algorithm is given in Section \ref{section3}. Finally, to illustrate the efficiency of our algorithm, we report some numerical tests in Section \ref{section4}.
	
	\section{The Proposed Algorithm}\label{section2}
	\noindent We begin this section with  some standard notation.
	As usual, let $\mathbb{N}$ be the set of all nonnegative integers,  $\mathbb{R}^{n}$
	and $\mathbb{R}^{m\times n}$ be the $n$-dimensional Euclidean space and the set of all real $m\times n$ matrices. Let $ \Vert \cdot \Vert $  denote the Euclidean vector norm on $\mathbb{R}^{n}$ and its corresponding induced matrix norm on $\mathbb{R}^{m\times n}$,  while $ \Vert \cdot \Vert_F $ specifically denotes the Frobenius norm on
	$\mathbb{R}^{m\times n}$.
	$I$ stands for the identity matrix in $\mathbb{R}^{n\times n}$ and $\text{diag}(a_1,a_2,\dots,a_n)\in\mathbb{R}^{m\times n}$  is the diagonal matrix with $\{a_i\}_{i=1}^n\subset\mathbb{R}$ being its  diagonal elements. For a matrix $ A\in\mathbb{R}^{m\times n} $ and vector $\bm{b} \in \mathbb{R}^n$, we denote the
	$(i,j)$-th entry of $ A $ by $ [A]_{ij} $ and the $i$-th component of $\bm{b}$ by $[\bm{b}]_{i}$.
	$ A^{-1} $ denotes the inverse matrix of $ A $.
	For a given $\bm{c}\in \mathbb{R}^n$, we use
	$B(\bm{c},r)$ to denote the open ball at $\bm{c}$ with radius $r > 0$. Let $ \left\{\sigma_i(\bm{c})\right\}_{i=1}^n $ be the singular values of the matrix  $ A(\bm{c}) $  with the order
	$	\sigma_1(\bm{c}) \geq \sigma_2(\bm{c}) \geq \cdots \geq \sigma_n(\bm{c}) \geq  0 $, 	$\{\bm{u}_i(\bm{c})\}_{i=1}^m $ and $\{\bm{v}_i(\bm{c})\}_{i=1}^n $
	be the corresponding normalized left singular vectors  and normalized right singular vectors, respectively.
	We also  assume throughout this paper  that  the given singular values $ \left\{ \sigma_i^*\right\}_{i=1}^n $ are all positive and distinct(cf. \cite{newton dist,new dist con,inexact newton,ulm dist,ashima isvp,2 inexact newton,2 ulm dist}), i.e.,
	\begin{equation}
\sigma_1^*>\sigma_2^*>\cdots>\sigma_n^*>0.\label{distinct}
	\end{equation}	
	For simplicity, we write $\boldsymbol{\sigma}^*:=(\sigma_1^*,\sigma_2^*,\cdots,\sigma_n^*)^T\in \mathbb{R}^{n}$,
	$\Sigma^{*}:= \operatorname{diag}(\sigma_{1}^{*}, \ldots, \sigma_{n}^{*}) \in \mathbb{R}^{m \times n}$, and define
	\begin{align*}
		&\mathcal{I}_1 := \left\{ (i, j) \mid 1 \leq i ,\; j \leq n,\; i \neq j \right\},\quad	\mathcal{I}_2 := \left\{ (i, j) \mid n+1 \leq i \leq m,\;1 \leq j \leq n   \right\},\\
		&\mathcal{I}_3 := \left\{ (i, j) \mid   1 \leq i \leq n,\;n+1 \leq j \leq m  \right\},\quad\mathcal{I}_4 := \left\{ (i, j) \mid n+1 \leq i,\; j \leq m,\; i \neq j  \right\}.
	\end{align*}
	Moreover, we assume throughout this paper that $\bm{c}^*$ is a solution of the ISVP \eqref{iep}.
We recall the two-step Ulm-Chebyshev-like method proposed by Ma \cite{2 ulm dist} for solving the ISVP \eqref{iep}.
	\begin{breakablealgorithm}\label{algorithm Ulm}
		\caption{The two-step Ulm-Chebyshev-like method}
		\begin{algorithmic}[1]
			
			\STATE Given $\bm{c}^0\in \mathbb{R}^{n}$, compute the singular values \(\{\sigma_i(\bm{c}^0)\}_{i=1}^n\), the orthogonal left singular vectors $\{\bm{u}_i(\bm{c}^0)\}_{i=1}^m$ and right singular vectors \(\{\bm{v}_i(\bm{c}^0)\}_{i=1}^n\) of \(A(\bm{c}^0)\). Let
			\begin{equation}
				U_0 := [\bm{u}_1^0, \bm{u}_2^0, \ldots, \bm{u}_m^0] = [\bm{u}_1(\bm{c}^0), \bm{u}_2(\bm{c}^0), \ldots, \bm{u}_m(\bm{c}^0)],\nonumber
			\end{equation}	
			\begin{equation}
				V_0 := [\bm{v}_1^0, \bm{v}_2^0, \ldots, \bm{v}_n^0] = [\bm{v}_1(\bm{c}^0), \bm{v}_2(\bm{c}^0), \ldots, \bm{v}_n(\bm{c}^0)],\nonumber
			\end{equation}	
			\begin{equation}
				\boldsymbol{\sigma}_0 := (\sigma_1^0, \sigma_2^0, \ldots, \sigma_n^0)^T = (\sigma_1(\bm{c}^0), \sigma_2(\bm{c}^0), \ldots, \sigma_n(\bm{c}^0))^T.\nonumber
			\end{equation}	
			Form the Jacobian matrix \( J_0\) and the vector \( \bm{b}^0 \) by
			\[
			[J_0]_{ij} := ({\bm{u}_i^0})^T A_j \bm{v}_i^0, \quad
			[\bm{b}^0]_i := ({\bm{u}_i^0})^T A_0 \bm{v}_i^0, \quad
			1 \leq i,\;j \leq n.
			\]
			
			Set \( B_0 := J_0^{-1} \) and \( \bm{s}^0 := (s_1^0, s_2^0, \ldots, s_n^0)^T = \boldsymbol{\sigma}^* \).
			\STATE For $k=0,1,\cdots $, until convergence, do.
			
			(1) Compute $\bm{y}^k$ by
			\begin{equation}
				\bm{y}^k = \bm{c}^k - B_k \left( J_k \bm{c}^k + \bm{b}^k - \boldsymbol{\sigma}^* \right).\nonumber
			\end{equation}
			
			(2) Compute the matrix $A(\bm{y}^k)$.
			
			(3) Compute the matrix $D_k:= U_k^TA(\bm{y}^k)V_k$.
			
			(4) Compute the skew-symmetric matrices $ X_k \in \mathbb{R}^{m \times m}$ and $ Y_k \in \mathbb{R}^{n \times n} $ by		
			\begin{equation}
				[X_k]_{ij} = 0, \quad (i,j) \in \mathcal{I}_4,	\nonumber
			\end{equation}
			\begin{equation}
				[X_k]_{ij} = -[X_k]_{ji} = \frac{[D_k]_{ij}}{s_j^k}, \quad (i,j) \in \mathcal{I}_2,	\nonumber	
			\end{equation}
			\begin{equation}
				[X_k]_{ij} = -[X_k]_{ji} = \frac{s_i^k [D_k]_{ji} + s_j^k [D_k]_{ij}}{(s_j^k)^2 - (s_i^k)^2}, \quad (i,j) \in \mathcal{I}_1,	\nonumber
			\end{equation}
			\begin{equation}	
				[Y_k]_{ij} = -[Y_k]_{ji} = \frac{s_i^k [D_k]_{ij} + s_j^k [D_k]_{ji}}{(s_j^k)^2 - (s_i^k)^2}, \quad (i,j) \in \mathcal{I}_1.	\nonumber
			\end{equation} 		
			
			(5) Compute \( {Z}_k := [\bm{z}_1^k, \bm{z}_2^k, \ldots, \bm{z}_m^k] \) and \(N_k := [\bm{n}_1^k, \bm{n}_2^k, \ldots, \bm{n}_n^k] \) by solving
			\begin{align}
				\left( I + \frac{1}{2} X_k \right) {Z}_k^T = \left( I - \frac{1}{2}X_k \right) U_k^T,\label{ma1'}\\
				\left( I + \frac{1}{2} Y_k \right) {N}_k^T = \left( I - \frac{1}{2} Y_k \right) V_k^T.\label{ma1}
			\end{align}

			(6) Compute \(\overline{\boldsymbol{\sigma}}(\bm{y}^k) := (\overline{\sigma}_1(\bm{y}^k), \overline{\sigma}_2(\bm{y}^k), \ldots, \overline{\sigma}_n(\bm{y}^k))^T\) by
			\[
			\overline{\sigma}_i(\bm{y}^k) = ({\bm{z}_i^k})^T A(\bm{y}^k) \bm{n}_i^k, \quad 1 \leq i \leq n.
			\]
			
			(7) Compute \( \bm{c}^{k+1} \) by
			\[
			\bm{c}^{k+1} = \bm{y}^k - B_k (\overline{\boldsymbol{\sigma}}(\bm{y}^k) - \boldsymbol{\sigma}^*).
			\]
			
			(8) Set \(\overline{\bm{s}}^k := (\overline{s}_1^k, \overline{s}_2^k, \ldots, \overline{s}_n^k)^T = \boldsymbol\sigma^* + \overline{\bm{t}}^k\), where
			\[
			\overline{\bm{t}}^k := \left( \overline{t}_1^k, \overline{t}_2^k, \ldots, \overline{t}_n^k \right)^T = (I - J_k B_k)(\overline{\boldsymbol\sigma}(\bm{y}^k)- \boldsymbol{\sigma^*}).
			\]
			
			(9) Compute the matrix $ A(\bm{c}^{k+1}) $.
			
			(10) Compute the matrix $ \overline{D}_k := U_k^T A(\bm{c}^{k+1}) V_k - U_k^T A(\bm{y}^k) V_k + Z_k^T A(\bm{y}^k) N_k. $
			
			(11) Compute the skew-symmetric matrices $ \overline{X}_k \in \mathbb{R}^{m \times m}  $ and $ \overline{Y}_k \in \mathbb{R}^{n \times n}$ by
			\[		[\overline{X}_k]_{ij} = 0, \quad (i,j) \in \mathcal{I}_4,	\]	
			\[	[\overline{X}_k]_{ij} = -[\overline{X}_k]_{ji} = \frac{[\overline{D}_k]_{ij}}{\overline{s}_j^k}, \quad (i,j) \in \mathcal{I}_2,		\]		
			\[		[\overline{X}_k]_{ij} = -[\overline{X}_k]_{ji} = \frac{\overline{s}_i^k [\overline{D}_k]_{ji} + \overline{s}_j^k [\overline{D}_k]_{ij}}{(\overline{s}_j^k)^2 - (\overline{s}_i^k)^2}, \quad (i,j) \in \mathcal{I}_1,		\]
			\[		[\overline{Y}_k]_{ij} = -[\overline{Y}_k]_{ji} = \frac{\overline{s}_i^k [\overline{D}_k]_{ij} + \overline{s}_j^k [\overline{D}_k]_{ji}}{(\overline{s}_j^k)^2 - (\overline{s}_i^k)^2}, \quad (i,j) \in \mathcal{I}_1.	\]
			
			(12) Compute \( U_{k+1}:= [\bm{u}_1^{k+1}, \bm{u}_2^{k+1}, \ldots, \bm{u}_m^{k+1}] \) and \( V_{k+1}:= [\bm{v}_1^{k+1}, \bm{v}_2^{k+1}, \ldots, \bm{v}_n^{k+1}] \) by solving
			\begin{align}
				\left( I + \frac{1}{2} \overline{X}_k \right) U_{k+1}^T = \left( I - \frac{1}{2} \overline{X}_k \right) Z_k^T,\label{ma2'}\\
				\left( I + \frac{1}{2} \overline{Y}_k \right) V_{k+1}^T = \left( I - \frac{1}{2} \overline{Y}_k \right) N_k^T.	\label{ma2}
			\end{align}

			(13) Compute \( \boldsymbol\sigma^{k+1}: = (\sigma_1^{k+1}, \sigma_2^{k+1}, \ldots, \sigma_n^{k+1})^T \) by
			\[\sigma_i^{k+1} = ({\bm{u}_i^{k+1}})^T A(\bm{c}^{k+1}) \bm{v}_i^{k+1}, \quad 1 \leq i \leq n.	\]
			
			(14) Form the approximate Jacobian matrix \( J_{k+1} \) and the vector \( \bm{b}^{k+1} \) by
			\[[J_{k+1}]_{ij} := ({\bm{u}_i^{k+1}})^T A_j \bm{v}_i^{k+1}, \quad 1 \leq i,\;j \leq n,	\]
			\[[\bm{b}]_{i}^{k+1} := ({\bm{u}_i^{k+1}})^T A_0 \bm{v}_i^{k+1}, \quad 1 \leq i \leq n.\]

			(15) Compute the Chebyshev matrices \( B_{k+1} \) by
			\[B_{k+1} = B_k + B_k (2I - J_{k+1} B_k)(I - J_{k+1} B_k).	\]
			
			(16) Set  \(\bm{s}^{k+1}:= (s_1^{k+1}, s_2^{k+1}, \ldots, s_n^{k+1})^T = \boldsymbol\sigma^* + \bm{t}^{k+1} \), where
			
			\[\bm{t}^{k+1}: = (t_1^{k+1}, t_2^{k+1}, \ldots, t_n^{k+1})^T = (I - J_{k+1} B_{k+1})(\boldsymbol\sigma^{k+1} - \boldsymbol\sigma^*).\]	
		\end{algorithmic}
	\end{breakablealgorithm}
	
	It should be noted that \autoref{algorithm Ulm} requires exact solution of $2(m+n)$ linear systems \eqref{ma1'}--\eqref{ma2}  in each iteration to generate the orthogonal matrices $Z_k$, $N_k$, $U_{k+1}$ and $V_{k+1}$, which are widely known as the Cayley transform (cf. \cite{Cayley1846}).
	As mentioned in	the introduction, each	Cayley transform takes $O(n^3)$ (or $O(m^3)$) arithmetic operations to produce an orthogonal	matrix from a skew-symmetric matrix.
	
	Inspired by \cite{ashima iep} and \cite{2 ulm dist}, we propose here  a Cayley-free two-step algorithm for solving the ISVP \eqref{iep}.
	For this end, we first note that $\{\bm{u}_i(\bm{c}^*)\}_{i=1}^n $ and $\{\bm{v}_i(\bm{c}^*)\}_{i=1}^n $
	are uniquely determined (ignore the signs) and that
	$\{\bm{u}_{i}(\bm{c}^*)\}_{i=n+1}^m$ are  not unique. Write $V(\bm{c}^*):=[\bm{v}_1(\bm{c}^*),\;\bm{v}_2(\bm{c}^*),\;\ldots,\;\bm{v}_n(\bm{c}^*)]$.
	Let $\mathcal{U}$ be the set of orthogonal matrices $U(\bm{c}^*)$ composed by the normalized left singular vectors $\{\bm{u}_{i}(\bm{c}^*)\}_{i=1}^m$ of $A(\bm{c}^*)$, i.e.,
	\begin{equation}
\mathcal{U}:=\{U(\bm{c}^*):=[\bm{u}_1(\bm{c}^*),\;\bm{u}_2(\bm{c}^*),\;\ldots,\;\bm{u}_m(\bm{c}^*)]\mid U(\bm{c}^*)^TA(\bm{c}^*)V(\bm{c}^*)=\Sigma^{*}\}.\nonumber
\end{equation}
	To proceed,  fix $U(\bm{c}^*)\in\mathcal{U}$ and $k\in\mathbb{N}$. Suppose that $\bm{c}^k$,  $U_k:=[\bm{u}_1^{k}, \bm{u}_2^{k}, \ldots, \bm{u}_m^{k}]$ and $V_k:=[\bm{v}_1^{k}, \bm{v}_2^{k}, \ldots, \bm{v}_n^{k}]$ are current computed approximations of  $\bm{c}^*$, $U(\bm{c}^*)$ and $V(\bm{c}^*)$, respectively.  Define
	\begin{equation}\label{minu^*}
		U_k^* := \arg\min_{U \in \mathcal{U}} \| U - U_k \|_F.
	\end{equation}
	Then,  $U_k^*:=[\bm{u}_1(\bm{c}^*),\;\bm{u}_2(\bm{c}^*),\;\ldots,\;\bm{u}_n(\bm{c}^*),\;\bm{u}_{n+1}^k(\bm{c}^*),\;\ldots,\;\bm{u}_{m}^k(\bm{c}^*)]$ is orthogonal and  ${U_k^*}^T A(\bm{c}^{*})V(\bm{c}^*)=\Sigma^{*} $.
	Let $X_k\in \mathbb{R}^{m \times m}$, $Y_k\in \mathbb{R}^{n \times n}$ such that
	\begin{equation}\label{defuv}
		U_k=U_k^*(I+X_k)\quad{\rm and}\quad V_k=V(\bm{c}^*)(I+Y_k).
	\end{equation}
	Thus, one checks
	\begin{equation}\label{u1}
		U_k^TU_k=I+X_k+X_k^T+X_k^TX_k,
	\end{equation}
	\begin{equation}\label{v1}
		V_k^TV_k=I+Y_k+Y_k^T+Y_k^TY_k
	\end{equation}
	and
	\begin{equation}\label{c^*1}
		U_{k}^{T} A(\bm{c}^{*}) V_{k} = \Sigma^{*} + \Sigma^{*} Y_{k} + X_{k}^{T} \Sigma^{*} + X_{k}^{T} \Sigma^{*} Y_{k}.
	\end{equation}
	Omitting the quadratic terms in \eqref{u1}--\eqref{c^*1}, we may expect that	$\overline{X}_k$, $\overline{Y}_k$ and $\overline{\bm{c}}^k$, the approximations of  $X_k$, $Y_k$ and $\bm{c}^*$,  satisfy the following three equations:
	\begin{equation}\label{ox1}
		U_k^TU_k=I+\overline{X}_k+\overline{X}_k^T,
	\end{equation}
	\begin{equation}\label{oy1}
		V_k^TV_k=I+\overline{Y}_k+\overline{Y}_k^T
	\end{equation}
	and
	\begin{equation}\label{oc1}
		[U_{k}^{T} A(\overline{\bm{c}}^k) V_{k}]_{ij} = [\Sigma^{*} + \Sigma^{*} \overline{Y}_k+\overline{X}_k^{T} \Sigma^{*}]_{ij}, \quad  (i,j)\in \mathcal{I}_1\cup\mathcal{I}_2.
	\end{equation}
	Considering the diagonal elements of \eqref{ox1} and \eqref{oy1}, we have
	\begin{equation}\label{diag ox}
		[\overline{X}_k]_{ii} = \frac{({{\bm{u}}_i^k})^T {\bm{u}}_i^k - 1}{2}, \quad 1 \leq i \leq m
	\end{equation}
	and
	\begin{equation}\label{diag oy}
		[\overline{Y}_k]_{ii} = \frac{({{\bm{v}}_i^k})^T {\bm{v}}_i^k - 1}{2}, \quad 1 \leq i \leq n.
	\end{equation}
	On the other hand, we note by \eqref{ox1} that  $$	U_k^TU_k\Sigma^{*}=\Sigma^{*}+\overline{X}_k\Sigma^{*}+\overline{X}_k^T\Sigma^{*},$$ which together with \eqref{oc1}
	gives
	\begin{equation}\label{oxy1}
		[U_{k}^{T}U_{k}\Sigma^{*} - U_{k}^{T}A(\overline{\bm{c}}^{k})V_{k}]_{ij} = [\overline{X}_{k}\Sigma^{*} - \Sigma^{*}\overline{Y}_{k}]_{ij}, \quad  (i,j)\in \mathcal{I}_1\cup\mathcal{I}_2.
	\end{equation}
	Similarly, thanks to \eqref{oy1} and \eqref{oc1}, we get
	\begin{equation}\label{oxy2}
		[\Sigma^{*}V_{k}^{T}V_{k} - U_{k}^{T}A(\overline{\bm{c}}^{k})V_{k}]_{ij} = [\Sigma^{*}\overline{Y}_{k}^T - \overline{X}_{k}^T\Sigma^{*}]_{ij}, \quad  (i,j)\in \mathcal{I}_1\cup\mathcal{I}_2.
	\end{equation}
	Writing $W_k:=U_{k}^{T}A(\overline{\bm{c}}^{k})V_{k}$, we deduce from \eqref{oxy1} and \eqref{oxy2} that
	\begin{equation}\label{upox}
		[\overline{X}_k]_{ij}=\frac{\sigma_i^*[W_k]_{ji}+\sigma_j^*[W_k]_{ij}-(\sigma_j^*)^2({{\bm{u}_i^k}})^T\bm{u}_j^k-\sigma_i^*\sigma_j^*({{\bm{v}_i^k}})^T\bm{v}_j^k}{ (\sigma_i^*)^2 - (\sigma_j^*)^2 },\;(i,j) \in \mathcal{I}_1
	\end{equation}
	and
	\begin{equation}\label{upoy}
		[\overline{Y}_k]_{ij} = \frac{\sigma_i^* [W_k]_{ij} +\sigma_j^* [W_k]_{ji}-\sigma_i^*\sigma_j^*({\bm{u}_i^k})^T\bm{u}_j^k-(\sigma_j^*)^2({\bm{v}_j^k})^T\bm{v}_i^k}{(\sigma_i^*)^2 - (\sigma_j^*)^2},\;(i,j) \in \mathcal{I}_1.
	\end{equation}
	Moreover, using \eqref{ox1} and \eqref{oxy2} again, we have
	\begin{equation}\nonumber
		({\bm{u}_i^k})^T\bm{u}_j^k	=[\overline{X}_{k}]_{ij}+[\overline{X}_{k}]_{ji},\quad(i,j) \in \mathcal{I}_2\\
	\end{equation}
	and
	\begin{equation}\nonumber
		-[W_k]_{ij}=-\sigma_j^{*}[\overline{X}_{k}]_{ji},\quad(i,j) \in \mathcal{I}_2.
	\end{equation}
	This gives
	\begin{equation}\label{oxother2}
		[\overline{X}_k]_{ij}=({\bm{u}_i^k})^T\bm{u}_j^k-\frac{[W_k]_{ij}}{\sigma_j^*}, \quad  (i,j)\in \mathcal{I}_2
	\end{equation}
and
	\begin{equation}\label{oxother3}
		[\overline{X}_k]_{ij}=\frac{[W_k]_{ji}}{\sigma_i^*}, \quad  (i,j)\in \mathcal{I}_3.
	\end{equation}
	Finally, in view of  \eqref{ox1}, we set
	\begin{equation}\label{oxfree}
		[\overline{X}_k]_{ji} =[\overline{X}_k]_{ij}= \frac{({\bm{u}_i^k})^T\bm{u}_j^k}{2}, \quad (i,j) \in \mathcal{I}_4.
	\end{equation}
	After that, motivated by \cite{ashima iep}, we compute the matrix $\overline{U}_k:=[\overline{\bm{u}}_1^{k}, \overline{\bm{u}}_2^{k}, \ldots, \overline{\bm{u}}_m^{k}]$ and $\overline{V}_k:=[\overline{\bm{v}}_1^{k}, \overline{\bm{v}}_2^{k}, \ldots, \overline{\bm{v}}_n^{k}]$ by
	\begin{equation}\label{defouv}	
		\overline{U}_k := U_k( I - \overline{X}_k)\quad{\rm and}\quad\overline{V}_k := V_k( I - \overline{Y}_k),
	\end{equation}
	where $ I - {X_k}$ and $  I - {Y_k}$ is the first-order approximation of $(I + {X_k})^{-1}$ and $(I + {Y_k})^{-1}$ obtained by the Neumann series.
	
	Next, we let $E_k \in \mathbb{R}^{m \times m}$ and $F_k \in \mathbb{R}^{n \times n}$ such that
	\begin{equation}\label{defzn}
		\overline{U}_k=\overline{U}_k^*(I+E_k)\quad{\rm and}\quad\overline{V}_k=V(\bm{c}^*)(I+F_k),
	\end{equation}
	where $\overline{U}_k^*$ satisfies that
	\begin{equation}\label{minou^*}
		\overline{U}_k^* := \arg\min_{U \in \mathcal{U}} \| U - \overline{U}_k \|_F.
	\end{equation}
	Similar to \eqref{u1}--\eqref{c^*1}, one has
	\begin{equation}\label{z1}
		{\overline{U}_k}^T\overline{U}_k=I+E_k+E_k^T+E_k^TE_k,
	\end{equation}
	\begin{equation}\label{n1}
		{\overline{V}_k}^T\overline{V}_k=I+F_k+F_k^T+F_k^TF_k
	\end{equation}
	and
	\begin{equation}\label{c^*2}
		{\overline{U}_k}^T A(\bm{c}^{*})\overline{V}_k = \Sigma^{*} + \Sigma^{*} F_{k} + E_{k}^{T} \Sigma^{*} + E_{k}^{T} \Sigma^{*}F_{k} .
	\end{equation}
	Omitting the quadratic terms in \eqref{z1}--\eqref{c^*2}, we  expect that	$\overline{E}_k$, $\overline{F}_k$ and $\bm{c}^{k+1}$, the approximations of  $E_k$, $F_k$ and $\bm{c}^*$,  satisfy the following three equations:
	\begin{equation}\label{zz}
		{\overline{U}_k}^T\overline{U}_k=I+	{\overline{E}}_k^T+	\overline{E}_k,
	\end{equation}
	\begin{equation}\label{nn}
		{\overline{V}_k}^T\overline{V}_k=I+	{\overline{F}}_k^T+	\overline{F}_k
	\end{equation}
	and
	\begin{equation}\label{ck+1}
		{\overline{U}_k}^T A(\bm{c}^{k+1})\overline{V}_k = \Sigma^{*} + \Sigma^{*}{\overline{F}_k} +{\overline{E}_k}^T \Sigma^{*}.
	\end{equation}
	Then, as we did for $\overline{X}_k$ and $\overline{Y}_k$,   we obtain by \eqref{zz}--\eqref{ck+1} that
	\begin{equation}\label{oe}
		[\overline{E}_k]_{ii}=\frac{({\overline{\bm{u}}_i^k})^T\overline{\bm{u}}_i^k-1}{2}, \quad 1 \leq i \leq m,
	\end{equation}
	\begin{equation}\label{upoe}
		[\overline{E}_k]_{ij}=
		\frac{\sigma_i^*[{\overline{W}_k}]_{ji}+\sigma_j^*[{\overline{W}_k}]_{ij}-(\sigma_j^*)^2({\overline{\bm{u}}_i^k})^T\overline{\bm{u}}_j^k-\sigma_i^*\sigma_j^*({\overline{\bm{v}}_i^k})^T\overline{\bm{v}}_j^k}{ (\sigma_i^*)^2 - (\sigma_j^*)^2 }, \; (i,j)\in \mathcal{I}_1,
	\end{equation}
	\begin{equation}\label{oeother}
		[\overline{E}_k]_{ij}=
		({\overline{\bm{u}}_i^k})^T\overline{\bm{u}}_j^k-\frac{[W_k]_{ij}}{\sigma_j^*}, \quad (i,j)\in \mathcal{I}_2,
	\end{equation}
	\begin{equation}\label{oeother3}
		[{\overline{E}}_k]_{ij}=\frac{[\overline{W}_k]_{ji}}{\sigma_i^*}, \quad (i,j)\in \mathcal{I}_3,
	\end{equation}
	\begin{equation}\label{oefree}
		[\overline{E}_k]_{ij}=\frac{({\overline{\bm{u}}_i^k})^T\overline{\bm{u}}_j^k}{2}, \quad (i,j)\in \mathcal{I}_4,
	\end{equation}
	\begin{equation}\label{of}
		[{\overline{F}_k}]_{ii}=\frac{({\overline{\bm{v}}_i^k})^T\overline{\bm{v}}_i^k-1}{2}, \quad 1 \leq i\leq n,
	\end{equation}
	\begin{equation}\label{upof}
		[{\overline{F}_k}]_{ij}=
		\frac{\sigma_i^* [W_k]_{ij} +\sigma_j^* [W_k]_{ji}-\sigma_i^*\sigma_j^*({\overline{\bm{u}}_i^k})^T\overline{\bm{u}}_j^k-(\sigma_j^*)^2({\overline{\bm{v}}_j^k})^T\overline{\bm{v}}_i^k}{(\sigma_i^*)^2 - (\sigma_j^*)^2},\;  (i,j)\in \mathcal{I}_1,
	\end{equation}
	where $\overline{W}_k:=\overline{U}_{k}^{T}A({\bm c}^{k+1})\overline{V}_{k}$.
	Finally, we form $U_{k+1}$ and $V_{k+1}$ by
	\begin{equation}\label{uvk+1}
		U_{k+1} := \overline{U}_k( I - \overline{E}_k)\quad {\rm and}\quad  V_{k+1} := \overline{V}_k( I - \overline{F}_k).
	\end{equation}
	
	We now  determine $\bm {c}^{k+1}$. To do this,  consider the nonlinear operator $\boldsymbol{g}: \mathbb{R}^{n}\rightarrow \mathbb{R}^{n}$ and recall  the two-step Ulm-Chebyshev method for solving  $ \boldsymbol{g}(\bm{x})=0$:
	\begin{equation}
		\left\{
		\begin{aligned}
			&	\bm{y}^k = \bm{x}^k - B_k \boldsymbol{g}(\bm{x}^k), \\
			&	\bm{x}^{k+1} = \bm{y}^k - B_k \boldsymbol{g}(\bm{y}^k), \label{ulmchebyshev} \\
			&	B_{k+1} = B_k + B_k (2I - \boldsymbol{g}'(\bm{x}^{k+1}) B_k) (I - \boldsymbol{g}'(\bm{x}^{k+1}) B_k),
		\end{aligned}
		\right.
	\end{equation}
	where $\bm{x}^0\in\mathbb{R}^{n}$ and $B_0\in \mathbb{R}^{n \times n}$ are given.
	To apply \eqref{ulmchebyshev} for solving the nonlinear system $ \boldsymbol{f}(\bm{c})=0$, we note by \eqref{f}  that, for each $1 \leq i \leq n$,
	\begin{equation}\begin{array}{lll}\label{fdeform} [\boldsymbol{f}(\bm{c}^k)]_i&=\bm{u}_i(\bm{c}^k)^TA(\bm{c}^k)\bm{v}_i(\bm{c}^k)-\sigma_i^*\\
			&=\bm{u}_i(\bm{c}^k)^TA(\bm{c}^k)\bm{v}_i(\bm{c}^k)-{\frac{\sigma_i^*}{2}(\bm{u}_i(\bm{c}^k)^T\bm{u}_i(\bm{c}^k)+{\bm{v}_i(\bm{c}^k)^T}\bm{v}_i(\bm{c}^k))}.
	\end{array}\end{equation}
	Moreover, we recall from  \eqref{jacobi} that
	\begin{equation}\label{jacobik}
		[\boldsymbol{f}'(\bm{c}^k)]_{ij} := {\bm{u}_i(\bm{c}^k)}^T  A_j \bm{v}_i(\bm{c}^k),\quad 1 \leq i,\;j\leq n.
	\end{equation}
	Then, thanks to the definition of $\boldsymbol{f}$ in \eqref{f}, one checks
	\begin{equation}\label{feq}
		\boldsymbol{f}(\bm{c}^k)=\boldsymbol{f}'(\bm{c}^k)\bm{c}^k+\bm{b}(\bm{c}^k),
	\end{equation}
	where
	$\bm{b}(\bm{c}^k)$ is defined by $$[\bm{b}(\bm{c}^k)]_i := {\bm{u}_i(\bm{c}^k)}^T A_0\bm{v}_i(\bm{c}^k)-{\frac{\sigma_i^*({\bm{u}_i(\bm{c}^k)}^T\bm{u}_i(\bm{c}^k)+{\bm{v}_i(\bm{c}^k)}^T\bm{v}_i(\bm{c}^k))}{2}},\; 1 \leq i \leq n.$$
	Note that, the formulations of $\boldsymbol{f}(\bm{c}^k)$ and $\boldsymbol{f}'(\bm{c}^k)$ involve computing the singular vectors $\{\bm{u}_i(\bm{c}^k)\}_{i=1}^n$ and $\{\bm{v}_i(\bm{c}^k)\}_{i=1}^n$ of $A(\bm{c}^k)$  incurs significant computational costs, particularly when $A(\bm{c}^k)$ is large-scale.
	A natural idea is to replace the singular vectors with approximate vectors  $\{\bm{u}_i^k\}_{i=1}^n$ (or $\{\overline{\bm{u}}_i^k\}_{i=1}^n$ ) and $\{\bm{v}_i^k\}_{i=1}^n$ (or $\{\overline{\bm{u}}_i^k\}_{i=1}^n$).	
	Therefore, in view of \eqref{ulmchebyshev}--\eqref{feq}, one may compute $\bm{c}^{k+1}$
	via the following two-step iterative procedure:
	\begin{equation}
		\left\{
		\begin{aligned}
			&	\overline{\bm{c}}^k = \bm{c}^k - B_k (J_k \bm{c}^k + \bm{b}^k), \nonumber\\
			&	\bm{c}^{k+1} =\overline{\bm{c}}^k- B_k\boldsymbol{\rho}^k, \label{itpro}\\
			&	B_{k+1} = B_k + B_k \left(2I - J_{k+1} B_k\right) \left(I - J_{k+1} B_k\right), \nonumber
		\end{aligned}
		\right.
	\end{equation}
	where $J_k,\; \bm{b}^k$ and $\boldsymbol{\rho}^k$ are respectively defined by
	\begin{align}
		[J_k]_{ij} &:= ({\bm{u}_i^k})^T A_j \bm{v}_i^k, \quad 1 \leq i,\;j \leq n,\label{defjk}\\
		[\bm{b}^k]_i &:= ({\bm{u}_i^k})^T A_0 \bm{v}_i^k-{\frac{\sigma_i^*(({\bm{u}_i^k})^T\bm{u}_i^k+({\bm{v}_i^k})^T\bm{v}_i^k)}{2}}, \quad 1 \leq i\leq n, \label{defbk}\\
		[\boldsymbol{\rho}^k]_i&:=({\overline{\bm{u}}_i^k})^T A(\overline{\bm{c}}_k) \overline{\bm{v}}_i^k-{\frac{\sigma_i^*(({\overline{\bm{u}}_i^k})^T\overline{\bm{u}}_i^k+({\overline{\bm{v}}_i^k})^T\overline{\bm{v}}_i^k)}{2}}, \quad 1 \leq i\leq n.\label{rho}
	\end{align}
	
	Overall, we propose the following Cayley-free two-step algorithm for solving the ISVP \eqref{iep}.
	\begin{breakablealgorithm}\label{improved algorithm Ulm}
		\caption{The Cayley-free two-step algorithm}
		\begin{algorithmic}[1]				
			\STATE Given $\bm{c}^0\in \mathbb{R}^{n}$ and $ B_0 \in \mathbb{R}^{n \times n} $, compute the singular values \(\{\sigma_i(\bm{c}^0)\}_{i=1}^n\), the normalized left singular vectors $\{\bm{u}_i(\bm{c}^0)\}_{i=1}^m$ and the normalized  right singular vectors \(\{\bm{v}_i(\bm{c}^0)\}_{i=1}^n\) of \(A(\bm{c}^0)\). Let	
			
			\begin{equation}
				U_0 := [\bm{u}_1^0, \bm{u}_2^0, \ldots, \bm{u}_m^0] = [\bm{u}_1(\bm{c}^0), \bm{u}_2(\bm{c}^0), \ldots, \bm{u}_m(\bm{c}^0)],\nonumber
			\end{equation}	
			\begin{equation}
				V_0 := [\bm{v}_1^0, \bm{v}_2^0, \ldots, \bm{v}_n^0] = [\bm{v}_1(\bm{c}^0), \bm{v}_2(\bm{c}^0), \ldots, \bm{v}_n(\bm{c}^0)],\nonumber
			\end{equation}	
			\begin{equation}
				\boldsymbol{\sigma}_0 := (\sigma_1^0, \sigma_2^0, \ldots, \sigma_n^0)^T = (\sigma_1(\bm{c}^0), \sigma_2(\bm{c}^0), \ldots, \sigma_n(\bm{c}^0))^T.\nonumber
			\end{equation}	
			Form the Jacobian matrix \( J_0:=f'(\bm{c}^0) \) and the vector \( \bm{b}^0 \) by
			\begin{equation}
				[J_0]_{ij} := ({\bm{u}_i^0})^T A_j \bm{v}_i^0,\quad
				1 \leq i,\;j \leq n, \nonumber
			\end{equation}
			\begin{equation}
				[\bm{b}^0]_i := ({\bm{u}_i^0})^T A_0 \bm{v}_i^0-{\frac{\sigma_i^*(({\bm{u}_i^0})^T\bm{u}_i^0+({\bm{v}_i^0})^T\bm{v}_i^0)}{2}},\quad
				1 \leq i \leq n.\nonumber
			\end{equation}
			
			\STATE For $k=0,1,\cdots $, until convergence, do.
			
			(1) Compute $\overline{\bm{c}}^k$ by
			\begin{equation}
				\overline{\bm{c}}^k = \bm{c}^k - B_k (J_k \bm{c}^k + \bm{b}^k).\nonumber
			\end{equation}
			
			(2) Compute $\overline{X}_k$ and  $\overline{Y}_k$ by \eqref{diag ox}, \eqref{diag oy} and \eqref{upox}--\eqref{oxfree}.

			(3) Compute the matrix $\overline{U}_k:=[\overline{\bm{u}}^k_{1},\;\overline{\bm{u}}^k_{2},\;\ldots,\;\overline{\bm{u}}^k_m]$ and $\overline{V}_k:=[\overline{\bm{v}}^k_{1},\;\overline{\bm{v}}^k_{2},\;\ldots,\;\overline{\bm{v}}^k_n]$ by \eqref{defouv}.
			
			(4) Compute  $\boldsymbol{\rho}^k$ by \eqref{rho}.
			
			(5)  Compute \( \bm{c}^{k+1} \) by
			\[
			\bm{c}^{k+1} = \overline{\bm{c}}^k - B_k\boldsymbol{\rho}^k.
			\]
			(6) Compute $\overline{E}_k$ and  $\overline{F}_k$ by \eqref{oe}--\eqref{upof}.
			
			(7) Compute  $U_{k+1}:=[\bm{u}_1^{k+1},\; \bm{u}_2^{k+1}, \;\ldots,\; \bm{u}_m^{k+1}]$ and $V_{k+1}:=[\bm{v}_1^{k+1}, \;\bm{v}_2^{k+1},\; \ldots, \;\bm{v}_n^{k+1}]$  by \eqref{uvk+1}.
			
			(8) Compute the approximate Jacobian matrix \( J_{k+1} \) and the vector \( \bm{b}^{k+1} \) by
			\[
			[J_{k+1}]_{ij} = ({\bm{u}_i^{k+1}})^T A_j \bm{v}_i^{k+1}, \quad 1 \leq i, \, j \leq n,
			\]
			\[
			[\bm{b}^{k+1}]_i = ({\bm{u}_i^{k+1}})^TA_0\bm{v}_i^{k+1}-\frac{\sigma_i^*(({\bm{u}_i^{k+1}})^T\bm{u}_i^{k+1}+({\bm{v}_i^{k+1}})^T\bm{v}_i^{k+1})}{2}, \quad
			1 \leq i \leq n.
			\]
			(9) Compute the Chebyshev matrix \( B_{k+1} \) by
			\[
			B_{k+1} = B_k + B_k \left(2I - J_{k+1} B_k\right) \left(I - J_{k+1} B_k\right).
			\]
		\end{algorithmic}
	\end{breakablealgorithm}
	
	\section{Convergence Analysis}\label{section3}
	\noindent In this section, we will carry out the convergence analysis for \autoref{improved algorithm Ulm}. For this purpose, we recall the notion of root-convergence rate which is known in \cite[Chapter 9]{iterative solution}.
	\begin{definition}\label{defroot}
		Let $\{\bm{x}^k\}$ be a sequence with the limit $\bm{x}^*$. Then, the numbers
		\begin{equation}\nonumber
			R_p\{\bm{x}^k\}=\left\{
			\begin{aligned}
				&\limsup\limits_{k\to\infty}\Vert\bm{x}^k-\bm{x}^*\Vert^{\frac{1}{k}}, \ if \ p=1,\\
				&\limsup\limits_{k\to\infty}\Vert\bm{x}^k-\bm{x}^*\Vert^{\frac{1}{p^k}}, \ if \ p>1.
			\end{aligned}
			\right.
		\end{equation}
		are the root-convergence factors of $\{\bm{x}^k\}$. The quantity
		\begin{equation}\nonumber
			O_R\{\bm{x}^*\}=\left\{
			\begin{aligned}
				&\infty         \ \ &&if \ R_p\{\bm{x}^k\}=0,\ \ \forall p\in [1,\infty), \\
				&\inf\{p\in [1,\infty)|\;R_p\{\bm{x}^k\}=1\}, \ \ &&otherwise.
			\end{aligned}
			\right.
		\end{equation}
		is called the root-convergence rate of $\{\bm{x}^k\}$.
	\end{definition}
	The following  lemma  presents a perturbation bound for the inverse; see for example \cite[pp. 74-75]{matrix compution}.
	\begin{lemma}\label{lemma AB-1}
		Let $ A,\ B\in\mathbb{R}^{n\times n} $. Suppose that $ B $ is nonsingular and $ \Vert B^{-1}\Vert\cdot\Vert A-B\Vert<1 $. Then, $ A $ is nonsingular and moreover
		\begin{equation*}
			\Vert A^{-1}\Vert \leq \frac{\Vert B^{-1}\Vert}{1-\Vert B^{-1}\Vert\cdot \Vert A-B\Vert}.
		\end{equation*}
	\end{lemma}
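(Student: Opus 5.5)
The statement to prove is the Banach perturbation lemma for the inverse (Lemma~\ref{lemma AB-1}). The plan is to factor $A$ through $B$ and reduce everything to a Neumann series. I would write
\[
A = B - (B-A) = B\bigl(I - B^{-1}(B-A)\bigr),
\]
and set $M := B^{-1}(B-A)$. Submultiplicativity of the induced norm gives $\|M\| \le \|B^{-1}\|\,\|A-B\| =: q < 1$. So it suffices to show that $I-M$ is nonsingular with $\|(I-M)^{-1}\| \le 1/(1-q)$. Once that holds, $A$ is a product of two nonsingular matrices, hence nonsingular, and
\[
A^{-1} = (I-M)^{-1}B^{-1}.
\]
The norm bound then follows directly:
\[
\|A^{-1}\| \le \|(I-M)^{-1}\|\,\|B^{-1}\| \le \frac{\|B^{-1}\|}{1-\|B^{-1}\|\,\|A-B\|}.
\]

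For the key step I would avoid convergence of the Neumann series and argue directly from the norm. For any vector $\bm{x}$,
\[
\|(I-M)\bm{x}\| \ge \|\bm{x}\| - \|M\bm{x}\| \ge (1-q)\|\bm{x}\|.
\]
Since $q<1$, this forces $(I-M)\bm{x}\neq 0$ whenever $\bm{x}\neq 0$, so $I-M$ is injective and, being square, nonsingular. Substituting $\bm{x}=(I-M)^{-1}\bm{y}$ turns the same inequality into $\|(I-M)^{-1}\bm{y}\| \le \|\bm{y}\|/(1-q)$, which is exactly the needed operator-norm bound.

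As a consistency check, the Neumann series $\sum_{k\ge0}M^k$ converges absolutely because $\|M^k\|\le q^k$. Its sum inverts $I-M$ by telescoping and has norm at most $\sum_k q^k = 1/(1-q)$, in agreement with the direct argument.

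No step is a real obstacle. The only point needing care is the factorization order: one must factor $B$ out on the left, $A=B(I-B^{-1}(B-A))$, so that the hypothesis controls $\|B^{-1}(B-A)\|$ and the final estimate is in terms of $\|B^{-1}\|\,\|A-B\|$ as stated.
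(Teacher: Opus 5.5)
Your proof is correct and is essentially the standard argument the paper defers to (the paper gives no proof of its own and only cites Golub--Van Loan, pp.~74--75): factor $A=B\bigl(I-B^{-1}(B-A)\bigr)$ and bound $\|(I-M)^{-1}\|\le 1/(1-\|M\|)$, which the reference does by summing the Neumann series and you do via the equivalent lower bound $\|(I-M)\bm{x}\|\ge(1-q)\|\bm{x}\|$. One small correction to your closing remark: under the product hypothesis stated here, factoring on the right, $A=\bigl(I-(B-A)B^{-1}\bigr)B$, works equally well since $\|(B-A)B^{-1}\|\le q$ too; the order would matter only under the sharper hypothesis $\|B^{-1}(A-B)\|<1$.
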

	
	The  following result can be found in  \cite[Fact 7]{error bound}.
	
	\begin{lemma}\label{Q-Q<A-A}
		Let $\mathcal{O}(n)$ denote the set of $n \times n$ orthogonal matrices, and let $\mathcal{D}$ denote the set of $m \times n$ real diagonal matrices with decreasing diagonal entries. Let  $A \in \mathbb{R}^{m \times n}$.
		There exist positive numbers $r_0$ and $\eta_0$ such that
		
		\begin{equation*}
			\min_{U\in \mathcal{O}(m), V \in \mathcal{O}(n), U^TAV \in \mathcal{D}
			}	\|U-\overline{U}\|_{F}+\|V-\overline{V}\|_{F}  \leq \eta_0\|A-\overline{A}\|_{F},
		\end{equation*}
		whenever $\overline{U}\in \mathcal{O}(m), \overline{V} \in \mathcal{O}(n), \overline{U}^T\overline{A}\; \overline{V} \in \mathcal{D}, \|A-\overline{A}\|_{F} \leq r_0. $
	\end{lemma}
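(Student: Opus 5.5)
The plan is to reduce the statement to classical singular subspace perturbation theory: Weyl's inequality for singular values and Wedin's $\sin\Theta$ theorem for singular subspaces. We then construct an explicit feasible pair $(U,V)$ in the minimization that lies close to $(\overline U,\overline V)$. Since the minimum is at most the value at any feasible pair, this gives the bound. The constants $r_0,\eta_0$ will depend on $A$ only through its singular value gaps and its smallest positive singular value.

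\textbf{Setup.} Fix an SVD $A=U_0\Sigma V_0^T$ with $U_0\in\mathcal O(m)$, $V_0\in\mathcal O(n)$. Group the distinct singular values of $A$ as $s_1>s_2>\cdots>s_p\ge 0$, with multiplicities $n_1,\dots,n_p$. Let $\delta>0$ be the minimal gap between distinct values, and also $\le s_{p}$ or $\le s_{p-1}$ as appropriate, so that it separates the positive singular values from $0$. Choose $r_0\le\delta/4$.

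By Weyl's inequality $|\sigma_i(\overline A)-\sigma_i(A)|\le\|A-\overline A\|\le\|A-\overline A\|_F$. Since $\overline U^T\overline A\,\overline V\in\mathcal D$ has decreasing entries, the columns of $\overline U,\overline V$ split into consecutive blocks $\overline U_l,\overline V_l$ ($l=1,\dots,p$). Each block has singular values within $\delta/4$ of $s_l$ and is separated from the rest by at least $\delta/2$. Block $U_0,V_0$ in the same way; the left factor $U_0$ also carries an extra block spanning the orthogonal complement of the range, of size $m-n$.

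Wedin's theorem, applied to each cluster, gives
\[
\|\sin\Theta(\mathcal R(V_{0,l}),\mathcal R(\overline V_l))\|_F\le 2\|A-\overline A\|_F/\delta ,
\]
and the same bound for the left subspaces, including the complementary block.

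\textbf{Construction for a positive cluster $s_l>0$.} Let $Q_l$ be the orthogonal polar factor of $V_{0,l}^T\overline V_l$. A standard estimate gives
\[
\|V_{0,l}Q_l-\overline V_l\|_F\le\sqrt2\,\|\sin\Theta\|_F .
\]
Set $V_l:=V_{0,l}Q_l$ and $U_l:=AV_l/s_l=U_{0,l}Q_l$. Then $U_l^TAV_l=s_lI$, so left and right rotations are tied together consistently.

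To bound $U_l-\overline U_l$, write $\overline U_l=\overline A\,\overline V_l\overline\Sigma_l^{-1}$. Then
\[
U_l-\overline U_l = (A-\overline A)V_l/s_l+\overline A(V_l-\overline V_l)/s_l+\overline A\,\overline V_l\big(s_l^{-1}I-\overline\Sigma_l^{-1}\big).
\]
The first two terms are $O(\|A-\overline A\|_F)$ by the Wedin bound. The third is $O(\|A-\overline A\|_F)$ by Weyl, since the entries of $\overline\Sigma_l$ stay $\ge s_l-\delta/4$.

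\textbf{Zero cluster and left complement.} Here the left and right bases are not coupled through $A$. We take independent polar factors: one for $V_{0,p}^T\overline V_p$, and one for the full left block consisting of the left singular vectors of $s_p=0$ together with the extra $m-n$ columns, against the corresponding columns of $\overline U$. Each rotated block is within $O(\|A-\overline A\|_F/\delta)$ of its counterpart.

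\textbf{Conclusion.} The assembled $U=[U_1,\dots]$ and $V=[V_1,\dots]$ are orthogonal, since each block is an orthonormal basis of a mutually orthogonal singular subspace of $A$. They also satisfy $U^TAV=\Sigma\in\mathcal D$, so the pair is feasible for the minimum. Summing the block bounds yields the inequality with
\[
\eta_0=C\left(1+\frac{\|A\|+r_0}{\min_{s_l>0}s_l}\right)\frac{1}{\delta},
\]
where $C$ is a dimensional constant.

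\textbf{Main obstacle.} The delicate step is the positive clusters. There the rotation $Q_l$ must be the same on the left and right blocks, because an arbitrary independent pair of rotations would destroy the diagonal structure of $U^TAV$. Defining $U_l$ through $AV_l/s_l$ resolves this, at the cost of the extra Weyl-based estimate above. Care is also needed to ensure the ordering of $\overline\Sigma$ matches the clusters, which is exactly why we require $r_0<\delta/4$.
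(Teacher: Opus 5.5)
Your argument is correct, and it necessarily differs from the paper's, because the paper gives no proof here: it imports the lemma verbatim as Fact~7 of the cited error-bound reference. You instead derive it from standard perturbation theory, and the steps check out.
\begin{itemize}
\item Weyl's inequality with $r_0\le\delta/4$ aligns the diagonal of $\overline{\Sigma}$ with the clusters of $A$.
\item Wedin's theorem controls the canonical angles.
\item The orthogonal polar factor turns a $\sin\Theta$ bound into a bound on the basis distance: $\|V_{0,l}Q_l-\overline{V}_l\|_F^2=2\sum_i(1-\cos\theta_i)\le 2\|\sin\Theta\|_F^2$.
\item On each positive cluster, the coupling $U_l:=AV_l/s_l=U_{0,l}Q_l$ is exactly what makes the assembled pair feasible, $U^TAV=\Sigma$. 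The null block and the left complement can be rotated independently.
\item The three-term splitting of $U_l-\overline{U}_l$ is fine.
\end{itemize}

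One step should be phrased more carefully. Wedin's theorem, as usually stated, also requires the perturbed singular values of the cluster to be bounded away from $0$. So it cannot be applied directly to the zero cluster of $\overline{A}$, whose singular values may be arbitrarily small. Instead, get the bounds for $\mathcal{N}(A)$ versus the trailing columns of $\overline{V}$, and for $\mathcal{R}(A)^{\perp}$ versus the trailing columns of $\overline{U}$, as follows. Apply Wedin to the union of all positive clusters, then use that $\|\sin\Theta\|_F$ is unchanged when passing to orthogonal complements of equal dimension.

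As for what each approach buys: the citation is short, while your proof makes the constants explicit. You get $r_0=\delta/4$ and $\eta_0$ of order $(1+\|A\|/s^{+}_{\min})/\delta$, where $s^{+}_{\min}$ is the smallest positive singular value of $A$. So the constants depend on $A$ only through its singular value gaps.

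That explicitness is useful in this paper. It shows the lemma should be applied with the fixed matrix $A(\bm{c}^*)$ in the role of $A$, with $\delta$ taken as $r^*_{\min}$ (convention $\sigma^*_{n+1}:=0$). Then $A(\bm{c}^0)$, $U_0$, $V_0$ play the roles of $\overline{A}$, $\overline{U}$, $\overline{V}$. The proof of the main theorem assigns these roles the other way round. That makes $r_0,\eta_0$ depend on $\bm{c}^0$, and it only bounds a minimum over SVD pairs of $A(\bm{c}^0)$, not the distance from the computed $U_0,V_0$ to an SVD pair of $A(\bm{c}^*)$.
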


	Recall that $ \bm{c}^* \in \mathbb{R}^n$ is a solution of ISVP \eqref{iep} with the singular values $ \left\{ \sigma_i^*\right\}_{i=1}^n $ satisfying \eqref{distinct}. Recall the definition of $\boldsymbol{f}'(\bm{c}^*) $, by \cite[Lemma 4]{new dist con} and the continuity of the matrix and its inverse, we have the  following lemma.
	\begin{lemma}\label{lemma j<k}
		Suppose that $ \boldsymbol{f}'(\bm{c}^*) $ is nonsingular. Then there exist  $r_1>0$ and $ \eta_1 \geq 1 $
		such that for each $ \bm{c}\in B(\bm{c}^*,r_1)$, $ {\boldsymbol{f}'(\bm{c})} $ is also nonsingular and
		$$ \Vert                 {\boldsymbol{f}'(\bm{c})}^{-1}       \Vert \leq \eta_1.$$
	\end{lemma}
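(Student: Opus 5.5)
The statement is a direct consequence of two facts. First, $\boldsymbol{f}'$ is continuous near $\bm{c}^*$. Second, Lemma \ref{lemma AB-1} gives a perturbation bound for the inverse. The plan is as follows.

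\textbf{Step 1: continuity of the Jacobian.} Under \eqref{distinct}, the singular values of $A(\bm{c}^*)$ are positive and simple. By \cite[Lemma 4]{new dist con}, or by standard analytic perturbation theory for simple singular values, there is a neighborhood of $\bm{c}^*$ on which the singular values $\sigma_i(\bm{c})$, $1\le i\le n$, stay distinct and positive. On that neighborhood the vectors $\bm{u}_i(\bm{c})$, $\bm{v}_i(\bm{c})$ for $1\le i\le n$ can be chosen to depend continuously (indeed analytically) on $\bm{c}$, with signs fixed consistently. The entries $[\boldsymbol{f}'(\bm{c})]_{ij}=\bm{u}_i(\bm{c})^TA_j\bm{v}_i(\bm{c})$ only involve the products $\bm{u}_i\bm{v}_i^T$, so they are invariant under the simultaneous sign flip $(\bm{u}_i,\bm{v}_i)\mapsto(-\bm{u}_i,-\bm{v}_i)$. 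Hence $\bm{c}\mapsto\boldsymbol{f}'(\bm{c})$ is well defined and continuous there. I expect this to be the only substantive step, and it is entirely supplied by the cited lemma.

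\textbf{Step 2: choice of $r_1$.} Set $\beta:=\|\boldsymbol{f}'(\bm{c}^*)^{-1}\|$. By continuity, choose $r_1>0$ so that $B(\bm{c}^*,r_1)$ lies in the neighborhood from Step 1 and
\[
\|\boldsymbol{f}'(\bm{c})-\boldsymbol{f}'(\bm{c}^*)\|\le \frac{1}{2\beta}\quad\text{for all } \bm{c}\in B(\bm{c}^*,r_1).
\]

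\textbf{Step 3: invertibility and the bound.} Apply Lemma \ref{lemma AB-1} with $B=\boldsymbol{f}'(\bm{c}^*)$ and $A=\boldsymbol{f}'(\bm{c})$. Since $\|B^{-1}\|\,\|A-B\|\le \tfrac12<1$, the matrix $\boldsymbol{f}'(\bm{c})$ is nonsingular and
\[
\|\boldsymbol{f}'(\bm{c})^{-1}\|\le \frac{\beta}{1-\tfrac12}=2\beta .
\]

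\textbf{Step 4: normalization $\eta_1\ge1$.} Put $\eta_1:=\max\{1,2\beta\}$. This gives $\eta_1\ge 1$ and the required bound $\|\boldsymbol{f}'(\bm{c})^{-1}\|\le\eta_1$ for every $\bm{c}\in B(\bm{c}^*,r_1)$.
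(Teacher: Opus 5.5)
Your proposal is correct and matches the paper's one-line justification: continuity of $\boldsymbol{f}'$ near $\bm{c}^*$ from the cited lemma, combined with continuity of matrix inversion, which you make quantitative through the perturbation bound of \autoref{lemma AB-1}. You also note that the entries depend only on $\bm{u}_i\bm{v}_i^T$ and so are unaffected by the sign choice; the paper leaves this implicit.
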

	
	In the remainder, we assume that $\{\bm{c}^k\}$, $\{\overline{\bm{c}}^k\}$,  $\{\overline{E}_k\}$,  $\{\overline{F}_k\}$,  $\{\overline{X}_k\}$, $\{\overline{Y}_k\}$, $\{U_k\}$, $\{V_k\}$, $\{\overline{U}_k\}$ and $\{\overline{V}_k\}$ are generated by \autoref{improved algorithm Ulm}.  Recall that, for each $k\in\mathbb{N}$, the matrices $U_k^*$, $\overline{U}_{k}^*$, $X_k$,  $Y_k$, $E_k$ and $F_k$ are defined by \eqref{minu^*}, \eqref{defuv}, \eqref{defzn} and \eqref{minou^*}.
	Define $\{G_{k}\}$, $\{H_k\}\subset\mathbb{R}^{m\times m} $  by
	\begin{equation}\label{HHdef}
		\overline{U}_k=U_{k}^*(I+G_k)\quad {\rm and}\quad U_{k+1}=\overline{U}_{k}^*(I+H_{k})\quad\text{for each} \;k=0,\;1,\;\ldots.
	\end{equation}
	Then we have the following three lemmas. In particular,  \autoref{lemmasym} below is a direct consequence of \cite[Lemma 1]{ashima iep mu} as, by letting $U(\bm{c}^*)\in \mathcal{U} $,
	$$A(\bm{c}^*)A(\bm{c}^*)^T=U(\bm{c}^*)\operatorname{diag}((\sigma_{1}^{*})^2, \ldots, (\sigma_{n}^{*})^2,0, \ldots,0) U(\bm{c}^*)^T.$$
	While the proof of  \autoref{3times}  is similar to that of \cite[Lemma 2]{ashima iep mu}. For the sake of completeness, we still provide its proof here.
	
	\begin{lemma}\label{lemmasym}Let $k\in\mathbb{N}$.
		For each $(i, j)\in\mathcal{I}_4$, it holds that $[X_k]_{ij}=[X_k]_{ji}$.
	\end{lemma}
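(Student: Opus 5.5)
The plan is to exploit the minimality of $U_k^*$ in \eqref{minu^*} over the set $\mathcal{U}$ and to use the freedom in choosing the last $m-n$ columns. Every $U\in\mathcal{U}$ has the form
$$U=[\bm{u}_1(\bm{c}^*),\ldots,\bm{u}_n(\bm{c}^*),\,Q_0 P],$$
where $Q_0\in\mathbb{R}^{m\times(m-n)}$ is a fixed orthonormal basis of the null space of $A(\bm{c}^*)^T$ (that is, the orthogonal complement of $\operatorname{span}\{\bm{u}_i(\bm{c}^*)\}_{i=1}^n$), and $P\in\mathcal{O}(m-n)$ is arbitrary. This description follows from the displayed factorization of $A(\bm{c}^*)A(\bm{c}^*)^T$: the zero eigenvalue has multiplicity $m-n$, and the first $n$ columns are fixed up to sign because the $\sigma_i^*$ are distinct and positive. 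I also need the signs of the first $n$ columns to be consistent with $V(\bm{c}^*)$, so that $U^TA(\bm{c}^*)V(\bm{c}^*)=\Sigma^*$.

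Minimizing $\|U-U_k\|_F$ over $P$ only involves the last block. Writing $U_k=[U_k^{(1)},U_k^{(2)}]$, the problem reduces to the orthogonal Procrustes problem
$$\min_{P\in\mathcal{O}(m-n)}\|Q_0P-U_k^{(2)}\|_F,$$
which is equivalent to maximizing $\operatorname{tr}(P^TQ_0^TU_k^{(2)})$. The standard Procrustes argument applies here: let $M:=Q_0^TU_k^{(2)}$ and take its SVD $M=\widetilde{U}\widetilde{\Sigma}\widetilde{V}^T$. The maximizer is $P=\widetilde{U}\widetilde{V}^T$, and then $P^TM=\widetilde{V}\widetilde{\Sigma}\widetilde{V}^T$ is symmetric positive semidefinite. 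In fact, symmetry of $P^TM$ follows from any maximizer via first-order optimality: perturb $P\mapsto Pe^{tS}$ with $S$ skew, and the derivative $\operatorname{tr}(S^T P^T M)=0$ for all skew $S$ forces $P^TM$ to be symmetric.

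Now I read off $X_k$. From \eqref{defuv}, $X_k=(U_k^*)^TU_k-I$, so for $(i,j)\in\mathcal{I}_4$,
$$[X_k]_{ij}=\bm{u}_i^k(\bm{c}^*)^T\bm{u}_j^k,$$
where $\bm{u}_i^k(\bm{c}^*)$ are the columns of $Q_0P$. Hence the lower-right $(m-n)\times(m-n)$ block of $X_k$ equals $(Q_0P)^TU_k^{(2)}-I=P^TM-I$. Its off-diagonal part is therefore symmetric, which is exactly the claim.

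The main obstacle is the careful justification that $\mathcal{U}$ has exactly this product structure, with the first $n$ columns fixed. This includes sign consistency with the fixed $V(\bm{c}^*)$: $\bm{u}_i=A(\bm{c}^*)\bm{v}_i/\sigma_i^*$ determines each $\bm{u}_i$ uniquely. With that structure in hand, the minimization decouples and the Procrustes symmetry argument finishes the proof. This is essentially the content of \cite[Lemma 1]{ashima iep mu} applied to the symmetric matrix $A(\bm{c}^*)A(\bm{c}^*)^T$.
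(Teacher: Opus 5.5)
Your argument is correct and is essentially the paper's. The paper just invokes Aishima's Lemma~1 for the eigenvector matrices of $A(\bm{c}^*)A(\bm{c}^*)^T$, whose zero eigenvalue has multiplicity $m-n$; the content of that lemma is exactly the decoupled orthogonal Procrustes argument you give, which is the same polar-decomposition reduction the paper writes out in the proof of \autoref{3times}. Working directly with $\mathcal{U}$, with the first $n$ columns pinned by $V(\bm{c}^*)$, and getting symmetry from first-order optimality at any minimizer is slightly cleaner, since it sidesteps both the sign ambiguity of eigenvectors of $A(\bm{c}^*)A(\bm{c}^*)^T$ and any non-uniqueness of the $\arg\min$.
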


	\begin{lemma}\label{3times} Let $k\in\mathbb{N}$. Then,  the following inequalities hold:
		\begin{equation}\label{xk+1}
			\|E_k\| \leq 3\|G_{k}\|		\quad{\rm and}\quad \|X_{k+1}\| \leq 3\|H_{k}\|.
		\end{equation}
	\end{lemma}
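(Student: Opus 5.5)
The plan is to compare the two nearest points $U_k^*$ and $\overline{U}_k^*$ of $\mathcal{U}$ through the orthogonal matrix
$$Q_k:=(U_k^*)^T\overline{U}_k^*.$$
Using \eqref{HHdef} and \eqref{defzn}, we have $U_k^*(I+G_k)=\overline{U}_k=\overline{U}_k^*(I+E_k)$, so
$$E_k=Q_k^T(I+G_k)-I=Q_k^TG_k+(Q_k^T-I).$$
Hence $\|E_k\|\le\|G_k\|+\|Q_k-I\|$, and the whole task reduces to showing $\|Q_k-I\|\le 2\|G_k\|$. The inequality $\|X_{k+1}\|\le3\|H_k\|$ is the same argument with $Q:=(\overline{U}_k^*)^TU_{k+1}^*$, since $U_{k+1}=\overline{U}_k^*(I+H_k)=U_{k+1}^*(I+X_{k+1})$ and $U_{k+1}^*$ is defined by \eqref{minu^*}. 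I only describe the first case.

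\textbf{Step 1: block structure of $Q_k$.} Both $U_k^*$ and $\overline{U}_k^*$ lie in $\mathcal{U}$. Their first $n$ columns are singular vectors attached to distinct positive singular values (by \eqref{distinct}), so they agree up to signs. Their last $m-n$ columns are orthonormal bases of the same subspace, namely the orthogonal complement. Therefore
$$Q_k=\mathrm{diag}(D,Q_2),$$
with $D$ diagonal with entries $\pm1$ and $Q_2\in\mathcal{O}(m-n)$. Conversely, every matrix $U_k^*\,\mathrm{diag}(D',Q_2')$ of this form lies in $\mathcal{U}$, up to the sign convention on the first $n$ columns.

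\textbf{Step 2: rewrite the minimization.} By orthogonal invariance of $\|\cdot\|_F$, the definition \eqref{minou^*} says that $Q_k$ minimizes $\|Q'-(I+G_k)\|_F$ over all such block-diagonal $Q'$. The problem decouples into the following pieces:
\begin{itemize}
\item each sign $d_i$ minimizes $|d_i-(1+[G_k]_{ii})|$;
\item $Q_2$ solves an orthogonal Procrustes problem for $I+G_{22}$, where $G_{22}$ is the trailing $(m-n)\times(m-n)$ block of $G_k$. Hence $Q_2$ is the orthogonal polar factor of $I+G_{22}$.
\end{itemize}

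\textbf{Step 3: the key estimate $\|Q_k-(I+G_k)\|\le\|G_k\|$ (in the spectral norm), assuming $\|G_k\|<1$.} This is the step I expect to be the real obstacle, since the Frobenius minimality alone only gives a Frobenius bound.
\begin{itemize}
\item For the sign block: $|[G_k]_{ii}|\le\|G_k\|<1$ forces $1+[G_k]_{ii}>0$, so $D=I$.
\item For the polar block: if $M=I+G_{22}=P\,S$ with $P$ orthogonal and $S$ symmetric positive semidefinite, then $\|P-M\|=\max_i|1-s_i(M)|$. By Weyl's inequality for singular values, $|s_i(M)-1|\le\|G_{22}\|\le\|G_k\|$.
\end{itemize}
The off-diagonal blocks of $Q_k-(I+G_k)$ are blocks of $-G_k$. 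The diagonal blocks are $-\mathrm{diag}([G_k]_{ii})$ and $Q_2-M$, each of norm at most $\|G_k\|$. To keep the bound tight I would avoid a crude block-by-block triangle inequality. Instead, I would bound $\|Q_k-I\|$ directly: $Q_k-I=\mathrm{diag}(0,\,Q_2-I)$, and $\|Q_2-I\|\le\|Q_2-M\|+\|G_{22}\|\le2\|G_k\|$. This yields $\|Q_k-I\|\le2\|G_k\|$ directly, which is all we need.

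\textbf{Step 4: the remaining case and conclusion.} If $\|G_k\|\ge1$, then trivially $\|Q_k-I\|\le\|Q_k\|+\|I\|=2\le2\|G_k\|$. In either case $\|E_k\|\le\|G_k\|+2\|G_k\|=3\|G_k\|$, which is \eqref{xk+1}.
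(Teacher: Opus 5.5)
Your proof is correct and follows essentially the paper's route: $Q_k$ is block diagonal with identity leading block and trailing block the orthogonal polar factor of $I+G_{22}$, and the singular values of $I+G_{22}$ lie within $\|G_k\|$ of $1$. The only difference is bookkeeping: you split $E_k=Q_k^TG_k+(Q_k^T-I)$ and show $\|Q_k-I\|\le 2\|G_k\|$, whereas the paper splits $E_k=Q_k^T(G_k-G^k_{diag})+(P_k-I)$ and uses $\|G_k-G^k_{diag}\|\le 2\|G_k\|$ and $\|P_k-I\|\le\|G_k\|$.

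Minor points: because $V(\bm{c}^*)$ is fixed in $\mathcal{U}$, $\bm{u}_i(\bm{c}^*)=A(\bm{c}^*)\bm{v}_i(\bm{c}^*)/\sigma_i^*$ has no sign freedom, so $D=I$ always and your case split on $\|G_k\|<1$ is unnecessary. Also, the headline estimate $\|Q_k-(I+G_k)\|\le\|G_k\|$ in your Step 3 is never established (its leading diagonal block is $-G_{11}$, not $-\mathrm{diag}([G_k]_{ii})$), but, as you note, it is not needed.
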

	\begin{proof}
		We only need to prove the first inequality in \eqref{xk+1} as the proof of the second one is similar. For this end, we recall that $\overline{U}_k=[\overline{\bm{u}}_1^{k}, \overline{\bm{u}}_2^{k}, \ldots, \overline{\bm{u}}_m^{k}]$ and $U_k^*=[\bm{u}_1(\bm{c}^*),\ldots,\;\bm{u}_n(\bm{c}^*),\bm{u}_{n+1}^k(\bm{c}^*),\ldots,\bm{u}_{m}^k(\bm{c}^*)]$ is defined by \eqref{minu^*}. Write
		\begin{equation}\label{um-n}
			\overline{U}^k_{m-n}:=[\overline{\bm{u}}^k_{n+1},\;\overline{\bm{u}}^k_{n+2},\;\ldots,\;\overline{\bm{u}}^k_m]
		\end{equation}	and
		\begin{equation}\label{um-n^*}U_{m-n}^k(\bm{c}^*):=[\bm{u}_{n+1}^k(\bm{c}^*),\;\bm{u}_{n+2}^k(\bm{c}^*),\;\ldots,\;\bm{u}_m^k(\bm{c}^*)].	\end{equation}
		Then, the problem \eqref{minou^*} is reduced to the following optimization problem:
		\begin{equation}\label{optimm-n}
			\min_{Q^TQ=I}\|U_{m-n}^k(\bm{c}^*)Q-\overline{U}_{m-n}^k\|_F.
		\end{equation}
		%
		%
		%
		We can obtain the optimal $\overline{Q}_k$ of \eqref{optimm-n} by the polar decomposition
		\begin{equation}\label{polarsub}
			(U_{m-n}^k(\bm{c}^*))^{T}\overline{U}^k_{m-n}=\overline{Q}_k\overline{P}_k,
		\end{equation}
		where $\overline{Q}_k$ is an orthogonal matrix, and $\overline{P}_k$ is a symmetric and positive semi-deﬁnite matrix (see \cite[\S 6.4.1]{matrix compution}).
		Thus, by the definitions of $\overline{U}_k^*$ and $U_{k}^*$, one has
		\begin{equation}\label{U^*=}
			\overline{U}_k^*=[\bm{u}_{1}(\bm{c}^*),\;\bm{u}_{1}(\bm{c}^*),\;\ldots,\;\bm{u}_{n}(\bm{c}^*),\;U_{m-n}^k(\bm{c}^*)\overline{Q}_k]
			=U_{k}^*
			\begin{pmatrix}
				I & 0 \\
				0 & \overline{Q}_k
			\end{pmatrix}.						
		\end{equation}
		To proceed, we define $$Q_k:=\begin{pmatrix}
			I & 0 \\
			0 & \overline{Q}_k
		\end{pmatrix},\quad P_k:=\begin{pmatrix}
			I & 0 \\
			0 & \overline{P}_k
		\end{pmatrix},$$ and a block diagonal matrix $G^k_{diag}$ by					
		\begin{equation*}
			[G^k_{diag}]_{ij}:=\left\{\begin{array}{ll}
				[G_k]_{ij}, & \text{$n+1 \leq i,j \leq m$} \\
				0, & \text{otherwise}
			\end{array}.\right .
		\end{equation*}					
		Then, thanks to \eqref{U^*=}, one has
		\begin{equation}\label{over}
			\overline{U}^*_k=U^*_kQ_k.
		\end{equation}
		In addition, by	\eqref{HHdef}, \eqref{um-n}, \eqref{um-n^*}, \eqref{polarsub} and the definition of $G^k_{diag}$, one checks 	
		\begin{equation}\label{polarofi+x}
			I+G^k_{diag}=Q_kP_k.
		\end{equation}
		Note by \eqref{defzn}, \eqref{HHdef} and \eqref{over} that
		$$E_k= {\overline{U}^*_k}^T\overline{U}_k-I \nonumber
		= Q_k^T{U_k^*}^T\overline{U}_k-I \nonumber
		= Q_k^T(I+G_k)-I,$$
		which together with \eqref{polarofi+x} gives
		\begin{align}\label{X=}
			E_k= Q_k^T(G^k-G^k_{diag}+Q_kP_k)-I= Q_k^T(G^k-G^k_{diag})+P_k-I.
		\end{align}
		Since $\|P_k\|=\|I+G^k_{diag}\|$ (by \eqref{polarofi+x}),
		all eigenvalues of $P_k$ range over the interval \([1-\|G^k_{diag}\|,1+\|G^k_{diag}\|]\).
		Thus, we get
		\begin{equation}\label{P-I}
			\|P_k-I\|\leq\|G^k_{diag}\|.
		\end{equation}
		Moreover, by the definition of $G^k_{diag}$, one sees $\|G^k_{diag}\|\leq\|G^k\|.$		
		Combining this with  \eqref{X=} and \eqref{P-I}, we deduce
		\begin{equation}
			\|E_k\|\leq\|Q_k^T(G^k-G^k_{diag})\|+\|P_k-I\|\leq 3\|G^k\|. \nonumber
		\end{equation}
		The proof is complete.
	\end{proof}

	\begin{lemma}\label{lemma impront}
		There exists  a constant $\eta_2 >1$ such that for each \( k\in\mathbb{N} \), the following assertions hold:
		\begin{itemize}
			\item [$\rm(i)$]If $ \max\left\{\Vert X_k \Vert,\Vert Y_k \Vert\right\} \leq 1 $, then
			\begin{align}
				\max\left\{ \|X_k - \overline{X}_k\| ,\|Y_k - \overline{Y}_k\| \right\}
				\leq
				\eta_2({\Vert X_k \Vert}^2+{\Vert Y_k \Vert}^2 + \|\overline{\bm{c}}^k - \bm{c}^*\|),\label{xk-xkover}\\
				\max\left\{ \|E_k\|,\|F_k\| \right\}
				\leq
				3\eta_2({\Vert X_k \Vert}^2+{\Vert Y_k \Vert}^2 + \|\overline{\bm{c}}^k - \bm{c}^*\|);\label{ekfk}
			\end{align}
			\item [$\rm(ii)$]If $\max\left\{\Vert E_k \Vert,\Vert F_k \Vert\right\} \leq 1,$ then
			\begin{align}
				\max\left\{\|E_k - \overline{E}_k\| ,\|F_k - \overline{F}_k\| \right\}
				\leq
				\eta_2 (
				{\Vert E_k \Vert}^2+{\Vert F_k \Vert}^2+\|\bm{c}^{k+1} - \bm{c}^*\|
				), \label{ek-overek}\\
				\max\left\{ \|X_{k+1}\|,\|Y_{k+1}\| \right\}
				\leq
				3\eta_2 (
				{\Vert E_k \Vert}^2+{\Vert F_k \Vert}^2+\|\bm{c}^{k+1} - \bm{c}^*\|
				).\label{xk+1yk+1}
			\end{align}	
		\end{itemize}
	\end{lemma}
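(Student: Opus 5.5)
The plan is to prove (i) in detail; (ii) is the same argument with $(U_k,V_k,\overline{\bm c}^k,X_k,Y_k)$ replaced by $(\overline U_k,\overline V_k,\bm c^{k+1},E_k,F_k)$, with $U_k^*$ replaced by $\overline U_k^*$, and with $G_k$ replaced by $H_k$. The idea is to compare the exact identities \eqref{u1}--\eqref{c^*1}, which the true $X_k,Y_k$ satisfy, with the linearized defining equations \eqref{ox1}--\eqref{oc1}, which $\overline X_k,\overline Y_k$ satisfy. Subtracting them gives a linear system for $\Delta X:=X_k-\overline X_k$ and $\Delta Y:=Y_k-\overline Y_k$ whose right-hand side consists only of quadratic terms plus a term linear in $\overline{\bm c}^k-\bm c^*$.

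\textbf{Step 1: the perturbed system.} From \eqref{u1} and \eqref{ox1} we get
\[
\Delta X+\Delta X^T=-X_k^TX_k,
\]
and similarly from \eqref{v1} and \eqref{oy1}. For the cross equations we write $A(\overline{\bm c}^k)=A(\bm c^*)+\sum_j([\overline{\bm c}^k]_j-c^*_j)A_j$. Then $W_k=U_k^TA(\bm c^*)V_k+R_k$, where $\|R_k\|\le \|U_k\|\|V_k\|\sum_j\|A_j\|\,\|\overline{\bm c}^k-\bm c^*\|$. Since $\|X_k\|,\|Y_k\|\le1$, we have $\|U_k\|\le 2$ and $\|V_k\|\le2$, so $\|R_k\|\le C\|\overline{\bm c}^k-\bm c^*\|$. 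Substituting \eqref{c^*1} then gives, for $(i,j)\in\mathcal I_1\cup\mathcal I_2$,
\[
[\Sigma^*\Delta Y+\Delta X^T\Sigma^*]_{ij}=-[X_k^T\Sigma^*Y_k]_{ij}-[R_k]_{ij}.
\]

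\textbf{Step 2: solve entrywise.} The diagonal entries of $\Delta X$ and $\Delta Y$ equal $-\tfrac12[X_k^TX_k]_{ii}$ and $-\tfrac12[Y_k^TY_k]_{ii}$. For $(i,j)\in\mathcal I_1$, the $(i,j)$ and $(j,i)$ cross equations together with the symmetric-part identities form a $2\times2$ system in $[\Delta X]_{ij}$ and $[\Delta Y]_{ij}$, exactly mirroring the derivation of \eqref{upox}--\eqref{upoy}. Its determinant involves $(\sigma_i^*)^2-(\sigma_j^*)^2\neq0$ by \eqref{distinct}, so each entry is bounded by a constant depending on $\min_{i\ne j}|\sigma_i^*-\sigma_j^*|$, $\sigma_n^*$ and $\sigma_1^*$, times $\|X_k\|^2+\|Y_k\|^2+\|\overline{\bm c}^k-\bm c^*\|$. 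Here we use $\|X_k^T\Sigma^*Y_k\|\le\sigma_1^*(\|X_k\|^2+\|Y_k\|^2)/2$. The entries in $\mathcal I_2$ and $\mathcal I_3$ are handled via \eqref{oxother2}--\eqref{oxother3}, dividing by $\sigma_j^*\ge\sigma_n^*>0$. For $\mathcal I_4$, the delicate block, we use \autoref{lemmasym}: $[X_k]_{ij}=[X_k]_{ji}$ there, and \eqref{oxfree} chooses $\overline X_k$ symmetric on that block as well. Hence $[\Delta X]_{ij}=\tfrac12[\Delta X+\Delta X^T]_{ij}=-\tfrac12[X_k^TX_k]_{ij}$. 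Passing from the entrywise bounds to the spectral norm (via the Frobenius norm, at the cost of a factor depending on $m$) gives \eqref{xk-xkover} for a suitable $\eta_2$.

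\textbf{Step 3: bound $E_k,F_k$.} By \eqref{defouv} and \eqref{defuv},
\[
\overline U_k=U_k^*(I+X_k)(I-\overline X_k)=U_k^*\bigl(I+(X_k-\overline X_k)-X_k\overline X_k\bigr),
\]
so $G_k=\Delta X-X_k\overline X_k$. Writing $X_k\overline X_k=X_k^2-X_k\Delta X$ and using $\|X_k\|\le1$ gives $\|G_k\|\le 2\|\Delta X\|+\|X_k\|^2$, which is of the required order after enlarging $\eta_2$. \autoref{3times} then yields $\|E_k\|\le3\|G_k\|$. For $F_k$ no minimization over $\mathcal U$ is needed, since $V(\bm c^*)$ is fixed, and directly $F_k=\Delta Y-Y_k\overline Y_k$. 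A uniform choice of $\eta_2$ absorbs all constants and gives \eqref{ekfk}.

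\textbf{Main obstacle.} The main obstacle is the $\mathcal I_4$ block together with the nonuniqueness of $U_k^*$. The bound works only because the exact $X_k$ is symmetric on that block (\autoref{lemmasym}, which relies on the minimizing choice \eqref{minu^*}) and because the $\mathcal I_3$ and $\mathcal I_2$ formulas are consistent with the skew-type structure forced by \eqref{ox1}. I would check carefully that the $\mathcal I_2$ and $\mathcal I_3$ entries use the symmetric-part identity correctly, so that no first-order term survives.
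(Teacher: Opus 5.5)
Your proposal is correct and follows essentially the same route as the paper: subtract the linearized relations \eqref{ox1}--\eqref{oc1} from the exact ones \eqref{u1}--\eqref{c^*1}, solve entrywise block by block, and use \autoref{lemmasym} together with the symmetric choice \eqref{oxfree} on $\mathcal I_4$. The paper then passes to the spectral norm, bounds $G_k=(I+X_k)(X_k-\overline X_k)-X_k^2$, and applies \autoref{3times}, just as you do. The $\mathcal I_2$/$\mathcal I_3$ check you flag goes through exactly as in the paper's \eqref{x-oxi2}--\eqref{x-oxi3}: for $(i,j)\in\mathcal I_3$ one gets $[X_k-\overline X_k]_{ij}=-[R_k+X_k^T\Sigma^*Y_k]_{ji}/\sigma_i^*$, with no first-order term, and the $\mathcal I_2$ entries then follow from the symmetric-part identity.
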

	\begin{proof}
		For simplicity, we define
		\begin{equation*}
			r_{\min}^*:=\min\limits_{1\leq i \leq n}\left\{\sigma_i^*-\sigma_{i+1}^*\right\}\quad{\rm and}\quad	\tilde{\eta} := \frac{4m}{r_{\min}^*} \max \left\{ \|{\Sigma}^*\|, \, \max_j \|A_j\| \right\},
		\end{equation*}
		where   we adopt the convention $\sigma_{n+1}^*:=0$.
		Set $\eta_2:=1+2\tilde{\eta}.$ Below we shall show that $\eta_2$ is as desired. For this end, let $k \in \mathbb{N}$.
		
		(i) Suppose that $ \max\left\{\Vert X_k \Vert,\Vert Y_k \Vert\right\} \leq 1 $. We first give  the estimations  for $\|X_k-\overline{X}_k\|$ and $\|Y_k-\overline{Y}_k\|$. In fact, by \eqref{u1}, \eqref{v1}, \eqref{ox1} and  \eqref{oy1}, one has
		\begin{equation}\label{x-ox}	
			(X_k-\overline{X}_k)^T+(X_k-\overline{X}_k)+X_k^T X_k=0	
		\end{equation}
		and
		\begin{equation}\label{y-oy}
			(Y_k-\overline{Y}_k)^T+(Y_k-\overline{Y}_k)+Y_k^T Y_k=0,
		\end{equation}
		which gives
		\begin{equation}\label{diagx-ox}
			[X_k-\overline{X}_k]_{ii}=-\frac{[X_k^T X_k]_{ii}}{2},\quad 1 \leq i \leq m
		\end{equation}
		and
		\begin{equation}\label{diagy-oy}
			[Y_k-\overline{Y}_k]_{ii}=-\frac{[Y_k^T Y_k]_{ii}}{2}, \quad 1 \leq i \leq n.
		\end{equation}
		Note by \autoref{lemmasym} and \eqref{oxfree} that $[X_k-\overline{X}_k]_{ij}=[X_k-\overline{X}_k]_{ji} $ for each $ (i,j) \in \mathcal{I}_4$. Then, it follows  from \eqref{x-ox} that
		\begin{equation}\label{x-oxi4}
			[X_k-\overline{X}_k]_{ij} = -\frac{[X_k^TX_k]_{ij}}{2},\quad (i,j) \in \mathcal{I}_4.
		\end{equation}
		To proceed, we write for simplicity $T_k:=U_{k}^{T}(A(\bm{c}^{*}) - A(\overline{\bm{c}}^{k}))V_{k}-X_{k}^T \Sigma^{*} Y_{k}$.
		Then, thanks to \eqref{c^*1} and \eqref{oc1}, we have
		\begin{equation*}
			[T_k]_{ij} = [(X_{k}-\overline{X}_{k})^{T}\Sigma^{*}+\Sigma^{*}(Y_{k} - \overline{Y}_{k})]_{ij},\quad(i,j)\in \mathcal{I}_1\cup\mathcal{I}_2.
		\end{equation*}	
		This together with \eqref{x-ox} and  \eqref{y-oy} respectively gives
		\begin{equation}\label{cx-ox}
			[T_{k}+X_k^T X_k\Sigma^*]_{ij}
			= [\Sigma^{*}(Y_{k} - \overline{Y}_{k})-(X_{k}-\overline{X}_{k})\Sigma^{*} ]_{ij},\quad(i,j)\in \mathcal{I}_1\cup\mathcal{I}_2
		\end{equation}
		and
		\begin{equation}\label{cy-oy}
			[T_{k}+\Sigma^*Y_k^T Y_k]_{ij}= [(X_k-\overline{X}_k)^T\Sigma^*- \Sigma^*(Y_k-\overline{Y}_k)^T]_{ij},\quad(i,j)\in \mathcal{I}_1\cup\mathcal{I}_2.
		\end{equation}
		Thus, by solving the equations in \eqref{cx-ox} and \eqref{cy-oy} with $(i,j)\in \mathcal{I}_1$,  we  get	
		\begin{align}\label{x-oxi1}
			[X_k - \overline{X}_k]_{ij}
			=\frac{\sigma_j^*[T_{k}+X_k^T X_k\Sigma^{*}]_{ij}+\sigma_i^*[T_{k}+\Sigma^*Y_k^T Y_k]_{ji}}{(\sigma_i^*)^2-(\sigma_j^*)^2},\quad(i,j)\in \mathcal{I}_1,
		\end{align}
		\begin{align}\label{y-oyi1}
			[Y_k - \overline{Y}_k]_{ij}
			=\frac{ \sigma_i^*[T_{k}+X_k^T X_k\Sigma^{*}]_{ij}+\sigma_j^*[T_{k}+\Sigma^*Y_k^T Y_k]_{ji}}{(\sigma_i^*)^2-(\sigma_j^*)^2},\quad(i,j)\in \mathcal{I}_1.
		\end{align}	
		Since $[\Sigma^{*}(Y_{k} - \overline{Y}_{k})]_{ij}=0$ for each $(i,j) \in \mathcal{I}_2$,
		we have by  \eqref{cx-ox} that
		\begin{align}\label{x-oxi2}
			[X_k - \overline{X}_k]_{ij} =\frac{[T_k+X_k^T X_k\Sigma^*]_{ij}}{-\sigma_j^*},	\quad(i,j) \in \mathcal{I}_2.
		\end{align}
		Recall from \eqref{x-ox}  that
		\begin{equation}\label{ld x-ox}
			-[X_k^TX_k]_{ij}=[X_k-\overline{X}_{k}]_{ij}+[X_k-\overline{X}_{k}]_{ji},\quad (i,j) \in \mathcal{I}_2.
		\end{equation}
		Then, substituting  \eqref{x-oxi2} into \eqref{ld x-ox}, we have
		\begin{align}\label{x-oxi3}
			[X_k - \overline{X}_k]_{ij} =\frac{[T_k]_{ji}}{\sigma_i^*},	\quad (i,j) \in \mathcal{I}_3.
		\end{align}
		By \eqref{defuv}, \eqref{A} and the definition of $T_k$,  one checks
		\begin{equation*}
			\|T_k\|\le4\max\limits_j \|A_j\| \cdot \|\overline{\bm{c}}^k - \bm{c}^*\| +\|\Sigma^*\|({\|X_k\|}^2+{\|Y_k\|}^2)
		\end{equation*}
		(recalling  that $ \max\left\{\Vert X_k \Vert,\Vert Y_k \Vert\right\} \leq 1 $ and that $U_k^*$ and $V(\bm{c}^*)$ are orthogonal). Thus, noting
		the definition of $r_{\min}^*$ and using \eqref{diagx-ox}, \eqref{x-oxi4}, \eqref{x-oxi1}, \eqref{x-oxi2} and \eqref{x-oxi3}, we have by simple calculations that
		\begin{align*}
			|[X_k - \overline{X}_k]_{ii}|
			\leq \frac{{\|X_k\|}^2}{2}
			\leq \frac{\|\Sigma^*\|\cdot{\|X_k\|}^2}{r_{\min}^*},\quad 1 \leq i \leq m,
		\end{align*}		
		\begin{equation*}
			|[X_k-\overline{X}_k]_{ij}| \leq \frac{{\|X_k\|}^2}{2}
			\leq \frac{\|\Sigma^*\|\cdot{\|X_k\|}^2}{r_{\min}^*},\quad (i,j) \in \mathcal{I}_4,
		\end{equation*}.
		\begin{align}
			|[X_k - \overline{X}_k]_{ij}|
			&\leq \frac{\|T_k\|+\|\Sigma^*\|({\|X_k\|}^2+{\|Y_k\|}^2)}{|\sigma_i^*-\sigma_j^*|}\nonumber\\
			&\leq \frac{4\max\limits_j \|A_j\| \cdot \|\overline{\bm{c}}^k - \bm{c}^*\| +2\|\Sigma^*\|({\|X_k\|}^2+{\|Y_k\|}^2)}{r_{\min}^*},\quad (i,j) \in \mathcal{I}_1,\nonumber
		\end{align}
		
		\begin{align*}
			|[X_k - \overline{X}_k]_{ij}|
			&\leq \frac{\|T_k\|+\|\Sigma^*\|\cdot{\|X_k\|}^2}{r_{\min}^*}\nonumber\\
			&\leq \frac{4\max\limits_j \|A_j\| \cdot \|\overline{\bm{c}}^k - \bm{c}^*\| +\|\Sigma^*\|(2{\|X_k\|}^2+{\|Y_k\|}^2)}{r_{\min}^*},\quad (i,j) \in \mathcal{I}_2,
		\end{align*}
		\begin{align*}
			|[X_k - \overline{X}_k]_{ij}|
			&\leq \frac{4\max\limits_j \|A_j\| \cdot \|\overline{\bm{c}}^k - \bm{c}^*\| +\|\Sigma^*\|({\|X_k\|}^2+{\|Y_k\|}^2)}{r_{\min}^*},\quad (i,j) \in \mathcal{I}_3.
		\end{align*}
		Hence, we conclude
		\begin{align}\label{allx-ox}
			\lVert X_k - \overline{X}_k \rVert &\leq m \max_{1 \leq i,\;j \leq m} \left| [X_k - \overline{X}_k]_{ij} \right|\nonumber\\
			&\leq m\frac{4\max\limits_j \|A_j\| \cdot \|\overline{\bm{c}}^k - \bm{c}^*\| +2\|\Sigma^*\|({\|X_k\|}^2+{\|Y_k\|}^2)}{r_{\min}^*}\nonumber\\
			&\leq \tilde{\eta} ({\|X_k\|}^2+{\|Y_k\|}^2 + \|\overline{\bm{c}}^k - \bm{c}^*\|),
		\end{align}
		where the last inequality holds because of the definition of 	$\tilde{\eta} $. On the other hand, using a similar arguments for \eqref{allx-ox}, we have by \eqref{diagy-oy} and \eqref{y-oyi1} that
	\begin{align}
		\lVert Y_k - \overline{Y}_k \rVert 	\leq \tilde{\eta} ({\|X_k\|}^2+{\|Y_k\|}^2 + \|\overline{\bm{c}}^k - \bm{c}^*\|).\nonumber
	\end{align}
	Therefore, \eqref{xk-xkover} is seen to hold as $\tilde{\eta}\le \eta_2 $	by definition. Now we prove \eqref{ekfk}.
	By the first inequalities in \eqref{defuv} and \eqref{defouv}, we get
	\begin{equation*}
		\overline{U}_{k} = U_{k}^{*} (I + X_{k}) (I - \overline{X}_{k}).
	\end{equation*}
	Then, recalling that $\overline{U}_k=U_{k}^*(I+G_k)$, we have
	\begin{equation*}
		G_k = X_k - \overline{X}_k - X_k \overline{X}_k = (I+X_k)(X_k - \overline{X}_k) - X_k^2.
	\end{equation*}
	Thus, noting that $\max\left\{\Vert X_k \Vert,\Vert Y_k \Vert\right\} \leq 1$, one has by \eqref{allx-ox}  and the definition of $\eta_2$ that
	\begin{align}\label{esalle}
		\|	G_k\| &\leq 2 \|X_k - \overline{X}_k\| + \|X_k\|^2 \nonumber
		\leq \eta_2 ({\|X_k\|}^2+{\|Y_k\|}^2+ \|\overline{\bm{c}}^k - \bm{c}^*\|).
	\end{align}
	Similarly, using \eqref{defuv},  \eqref{defouv} and the second equality in \eqref{defzn}, one can easily prove
	\begin{equation*}
		\|F_k\| \leq \eta_2 ({\|X_k\|}^2+{\|Y_k\|}^2+ \|\overline{\bm{c}}^k - \bm{c}^*\|).
	\end{equation*}
	Therefore, applying \autoref{3times}, one sees that assertion (i) holds.	
	
	(ii)	We write for simplicity $\overline{T}_k:=\overline{U}_k^{T}(A(\bm{c}^{*}) - A(\bm{c}^{k+1}))\overline{V}_k-E_{k}^T \Sigma^{*} F_{k}$. Then, by \eqref{defzn} and \eqref{A},  one checks
	\begin{equation}\label{oTkest}
		\|\overline{T}_k\|\le4\max\limits_j \|A_j\| \cdot \|\overline{\bm{c}}^k - \bm{c}^*\| +\|\Sigma^*\|({\|E_k\|}^2+{\|F_k\|}^2)
	\end{equation}
	(recalling  that $ \max\left\{\Vert E_k \Vert,\Vert F_k \Vert\right\} \leq 1 $ and that $\overline{U}_k^*$ and $V(\bm{c}^*)$ are orthogonal).
	By \eqref{z1}--\eqref{ck+1}, we get
	\begin{equation}\label{z-oz}	
		(E_k-\overline{E}_k)^T+(E_k-\overline{E}_k)+E_k^T E_k=0,
	\end{equation}
	\begin{equation}\label{n-on}
		(F_k-\overline{F}_k)^T+(F_k-\overline{F}_k)+F_k^T F_k=0
	\end{equation}
	and
	\begin{equation}\label{ck+1-c^*}
		[\overline{T}_k]_{ij} = [(E_{k}-\overline{E}_{k})^{T}\Sigma^{*}+\Sigma^{*}(F_{k} - \overline{F}_{k})]_{ij},\quad (i,j) \in \mathcal{I}_1\cup\mathcal{I}_2.
	\end{equation}
	Similar to the arguments for \eqref{diagx-ox}--\eqref{x-oxi4}, \eqref{x-oxi1}--\eqref{x-oxi2} and \eqref{x-oxi3}, we get from \eqref{z-oz}--\eqref{ck+1-c^*} that

	\begin{equation*}
		[E_k - \overline{E}_k]_{ii}=-\frac{[E_k^T E_k]_{ii}}{2},\quad 1 \leq i \leq m,
	\end{equation*}
	
	\begin{equation*}
		[E_k - \overline{E}_k]_{ij}=\frac{\sigma_i^*[\overline{T}_k+\Sigma^*F_k^T F_k]_{ji}+\sigma_j^*[\overline{T}_k+E_k^T E_k\Sigma^*]_{ij}}{(\sigma_i^*)^2-(\sigma_j^*)^2},\quad (i,j) \in \mathcal{I}_1	,
	\end{equation*}
	
	\begin{equation*}
		[E_k - \overline{E}_k]_{ij}=\frac{[\overline{T}_k+E_k^TE_k\Sigma^*]_{ij}}{-\sigma_j^*},  \quad(i,j) \in \mathcal{I}_2,
	\end{equation*}
	\begin{equation*}
		[E_k - \overline{E}_k]_{ij}=	\frac{[\overline{T}_k]_{ji}}{\sigma_i^*},\quad (i,j) \in \mathcal{I}_3,
	\end{equation*}
	\begin{equation*}
		|	[E_k - \overline{E}_k]_{ij} | = -\frac{[E_k^TE_k]_{ij}}{2},\quad (i,j) \in \mathcal{I}_4,
	\end{equation*}
	\begin{equation*}
		[F_k - \overline{F}_k]_{ii}=	-\frac{[F_k^T F_k]_{ii}}{2},\quad 1 \leq i \leq n	,
	\end{equation*}
	
	\begin{equation*}
		[F_k - \overline{F}_k]_{ij}=\frac{\sigma_i^*[\overline{T}_k+E_k^T E_k\Sigma^*]_{ij}+\sigma_j^*[\overline{T}_k+\Sigma^*F_k^T F_k]_{ji}}{(\sigma_i^*)^2-(\sigma_j^*)^2}, \quad \quad (i,j) \in \mathcal{I}_1.
	\end{equation*}
	Thus noting \eqref{oTkest} and using similar arguments for \eqref{allx-ox}, we can easily prove
	\begin{align}\label{e-overf}
		\lVert E_k - \overline{E}_k \rVert \leq \tilde{\eta} ({\|E_k\|}^2+{\|F_k\|}^2 + \|\bm{c}^{k+1} - \bm{c}^*\|)
	\end{align}	
	and
	\begin{align*}
		\lVert F_k - \overline{F}_k \rVert\leq \tilde{\eta}({\|E_k\|}^2+{\|F_k\|}^2 + \|\bm{c}^{k+1} - \bm{c}^*\|).
	\end{align*}
	Hence, \eqref{ek-overek} is seen to hold (noting $ \tilde{\eta}\le\eta_2$). It remains to prove  \eqref{xk+1yk+1}. For brevity,  we only give the estimation of $\|X_{k+1}\|$
	as the one for $\|Y_{k+1}\|$ is similar. Note  by the first equalities in \eqref{defzn} and \eqref{uvk+1} that
	\begin{equation*}
		{U}_{k+1} = \overline{U}_k^* (I + E_{k}) (I - \overline{E}_{k}).
	\end{equation*}
	Then, recalling that $U_{k+1}=\overline{U}_{k}^*(I+{H}_{k})$, we have
	\begin{equation*}
		H_k = E_k - \overline{E}_k - E_k \overline{E}_k = (I+E_k)(E_k - \overline{E}_k) - E_k^2.
	\end{equation*}
	Thus, noting that $\max\left\{\Vert E_k \Vert,\Vert F_k \Vert\right\} \leq 1$, one has by \eqref{e-overf}  that
	\begin{align*}
		\|{H}_{k}\| \leq \eta_2 ( {\|E_k\|}^2+{\|F_k\|}^2+ \|\bm{c}^{k+1} - \bm{c}^*\|).
	\end{align*}
	Hence, thanks to \autoref{3times}, we obtain
	\begin{equation*}
		\|X_{k+1}\| \leq 3\|{H}_{k}\| \leq 3\eta_2 ( {\|E_k\|}^2+{\|F_k\|}^2+ \|\bm{c}^{k+1} - \bm{c}^*\|).
	\end{equation*}	
	Therefore, assertion (ii) holds and the proof is complete.
\end{proof}

Recall that $\{\overline{U}_k=[\bm{u}_1^k,\;\ldots,\;\bm{u}_m^k]\}$ and $\{\overline{V}_k=[\bm{v}_1^k,\;\ldots,\;\bm{v}_n^k]\}$ are generated by \autoref{improved algorithm Ulm}.
For each $k \in \mathbb{N}$, we define $\overline{J}_k\in\mathbb{R}^{n\times n}$ and $\overline{\boldsymbol{b}}^k\in\mathbb{R}^{n}$ by
\begin{align*}
	&[\overline{{J}}_k]_{ij}:=({\overline{\bm{u}}_i^k})^TA_j\overline{\bm{v}}_i^k,\quad1\leq i,\;j \leq n,\\
	&[\overline{\bm{b}}^k]_i:=({\overline{\bm{u}}_i^k})^T A_0 \overline{\bm{v}}_i^k-{\frac{\sigma_i^*(({\overline{\bm{u}}_i^k})^T\overline{\bm{u}}_i^k+({\overline{\bm{v}}_i^k})^T\overline{\bm{v}}_i^k)}{2}},\quad1\leq i \leq n .
\end{align*}
Recall also that $\{J_k\}$ and $\{\bm{b}^k\}$ are defined by \eqref{defjk} and \eqref{defbk}, respectively.
\begin{lemma}\label{esalljk+bk}
	For each $k \in \mathbb{N}$, we have
	\begin{equation}\label{esjk+bk}
		\Vert J_k\bm{c}^*+\bm{b}^k\Vert \leq 2\sqrt{n}\Vert \Sigma^* \Vert \left\{{\|X_k\|}^2,{\|Y_k\|}^2 \right\}
	\end{equation}
	and
	\begin{equation}\label{esojk+obk}
		\Vert \overline{J}_k\bm{c}^*+\overline{\bm{b}}^k\Vert
		\leq 2\sqrt{n}\Vert \Sigma^* \Vert\left\{{\|E_k\|}^2,{\|F_k\|}^2 \right\}.
	\end{equation}
	\begin{proof}
		We only give the proof of \eqref{esjk+bk} as the one for \eqref{esojk+obk} is similar.
		Note  by \eqref{u1}--\eqref{c^*1} that
		\begin{equation}\label{ac*-uksigma}
			U_k^TA(\bm{c}^*)V_k-U_k^TU_k\Sigma^*=\Sigma^*Y_k-X_k\Sigma^*+X_k^T\Sigma^*Y_k-X_k^TX_k\Sigma^*
		\end{equation}
		and
		\begin{equation}\label{ac*-vksigma}
			U_k^TA(\bm{c}^*)V_k-\Sigma^*V_k^TV_k=X_k^T\Sigma^*-\Sigma^*Y_k^T+X_k^T\Sigma^*Y_k-\Sigma^*Y_k^TY_k.
		\end{equation}
		Adding the diagonal terms of \eqref{ac*-uksigma} and \eqref{ac*-vksigma}, we have, for each $1\le i\le n$,
		\begin{equation*} 
			({\bm{u}_i^k})^TA(\bm{c}^*)\bm{v}_i^k-\frac{\sigma_i^*}{2}(({\bm{u}_i^k})^T\bm{u}_i^k+({\bm{v}_i^k})^T\bm{v}_i^k)=[X_k^T\Sigma^*Y_k-\frac{X_k^TX_k\Sigma^*+\Sigma^*Y_k^TY_k}{2}]_{ii},
		\end{equation*}
		and so
		\begin{align*}
			\vert ({\bm{u}_i^k})^TA(\bm{c}^*)\bm{v}_i^k-\frac{\sigma_i^*}{2}(({\bm{u}_i^k})^T\bm{u}_i^k+({\bm{v}_i^k})^T\bm{v}_i^k)\vert
			&\leq 2\Vert \Sigma^*\Vert\max\left\{{\|X_k\|}^2,{\|Y_k\|}^2 \right\}.
		\end{align*}
		Thus, \eqref{esjk+bk} is seen to  hold as, by  the definitions of $J_k$ and $\bm{b}^k$,
		\begin{equation}\label{jk+bk}
			[J_k\bm{c}^*+\bm{b}^k]_i= ({\bm{u}_i^k})^TA(\bm{c}^*)\bm{v}_i^k-\frac{\sigma_i^*}{2}(({\bm{u}_i^k})^T\bm{u}_i^k+({\bm{v}_i^k})^T\bm{v}_i^k), \quad	1 \leq i \leq n.\nonumber
		\end{equation}
		The proof is complete.
	\end{proof}
\end{lemma}

\begin{lemma}\label{esj_x-j_y}
	Let $l$ be a positive integer.	 Assume that $ \max\left\{\Vert X_k \Vert,\Vert Y_k \Vert\right\} \leq 1 $ and $\max\left\{\Vert E_k \Vert,\Vert F_k \Vert\right\} \leq 1$ hold for each $ 0 \leq k \leq l.$
	Then,  it holds that
	\begin{equation}
		\|J_l-J_0\|
		\leq 8n \max\limits_j\Vert A_j \Vert\sum\limits_{k=0}^{l-1}(\max\left\{\|\overline{X}_k\|, \|\overline{Y}_k\| \right\} + \max\left\{\|\overline{Y}_k\|, \|\overline{F}_k\| \right\})\label{esj_l-j_0}
	\end{equation}	
	and
	\begin{equation}\label{esj_l-oj_l}
		\Vert J_l- \overline{J}_l \Vert\leq 8n\max\limits_j\Vert A_j \Vert\cdot\max\left\{ \Vert \overline{X}_l\Vert,\Vert \overline{Y}_l \Vert \right\}.
	\end{equation}	
	Additionally, if $ \max\left\{\Vert X_{l+1} \Vert,\Vert Y_{l+1} \Vert\right\} \leq 1,$ then
	\begin{equation}\label{esj_l+1-j_l}
		\Vert J_{l+1}-J_l \Vert \leq 8n \max\limits_j\Vert A_j \Vert(\max\left\{ \|\overline{X}_k\|,\|\overline{Y}_k\| \right\}+\max\left\{ \|\overline{E}_k\|,\|\overline{F}_k\| \right\}).
	\end{equation}
\end{lemma}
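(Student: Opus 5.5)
The three estimates all rest on one entrywise perturbation bound for the matrices $J$, which we combine with the update formulas \eqref{defouv} and \eqref{uvk+1}. For matrices $U=[\bm{u}_1,\ldots,\bm{u}_m]$, $\tilde U=[\tilde{\bm{u}}_1,\ldots,\tilde{\bm{u}}_m]$ and $V$, $\tilde V$ with columns of norm at most $2$, write
\[
[J]_{ij}-[\tilde J]_{ij}=\bm{u}_i^TA_j\bm{v}_i-\tilde{\bm{u}}_i^TA_j\tilde{\bm{v}}_i=(\bm{u}_i-\tilde{\bm{u}}_i)^TA_j\bm{v}_i+\tilde{\bm{u}}_i^TA_j(\bm{v}_i-\tilde{\bm{v}}_i).
\]
This gives $|[J-\tilde J]_{ij}|\le 2\max_j\|A_j\|(\|U-\tilde U\|+\|V-\tilde V\|)$. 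Passing from the entrywise bound to the spectral norm via $\|M\|\le n\max_{ij}|[M]_{ij}|$ yields
\[
\|J-\tilde J\|\le 2n\max_j\|A_j\|\,(\|U-\tilde U\|+\|V-\tilde V\|).
\]

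\textbf{Norm control.} We first bound the columns. From \eqref{defuv}, $\|U_k\|\le\|I+X_k\|\le2$, so $\|U_k\|,\|V_k\|\le 2$ under the hypothesis $\max\{\|X_k\|,\|Y_k\|\}\le1$. Similarly, \eqref{defzn} together with $\max\{\|E_k\|,\|F_k\|\}\le1$ gives $\|\overline U_k\|,\|\overline V_k\|\le2$. Every column therefore has norm at most $2$.

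\textbf{The one-step differences.} Next we express each step difference through the update matrices:
\[
U_k-\overline U_k=U_k\overline X_k,\quad V_k-\overline V_k=V_k\overline Y_k,\quad \overline U_k-U_{k+1}=\overline U_k\overline E_k,\quad \overline V_k-V_{k+1}=\overline V_k\overline F_k.
\]
Hence $\|U_k-\overline U_k\|+\|V_k-\overline V_k\|\le 4\max\{\|\overline X_k\|,\|\overline Y_k\|\}$. Inserting this into the basic bound gives \eqref{esj_l-oj_l} with constant $8n\max_j\|A_j\|$. In the same way, $\|J_{k+1}-\overline J_k\|\le 8n\max_j\|A_j\|\max\{\|\overline E_k\|,\|\overline F_k\|\}$, where the columns of $U_{k+1},V_{k+1}$ are bounded by $2$ thanks to the extra hypothesis $\max\{\|X_{k+1}\|,\|Y_{k+1}\|\}\le1$. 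The triangle inequality $\|J_{k+1}-J_k\|\le\|J_{k+1}-\overline J_k\|+\|\overline J_k-J_k\|$ then gives \eqref{esj_l+1-j_l}, read with $k=l$.

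\textbf{Telescoping for \eqref{esj_l-j_0}.} We write $J_l-J_0=\sum_{k=0}^{l-1}(J_{k+1}-J_k)$ and apply the one-step bound for each $k\le l-1$. This is legitimate because the hypothesis of the lemma covers all indices up to $l$, so the norm bound on $X_{k+1},Y_{k+1}$ is available. The result is a sum of terms $\max\{\|\overline X_k\|,\|\overline Y_k\|\}+\max\{\|\overline E_k\|,\|\overline F_k\|\}$. The statement writes the second term as $\max\{\|\overline Y_k\|,\|\overline F_k\|\}$, which appears to be a typo; our argument naturally produces $\max\{\|\overline E_k\|,\|\overline F_k\|\}$, and that is the intended reading.

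\textbf{Main obstacle.} The only delicate point is bookkeeping. We must check that the column-norm bounds of $2$ for $U_k,\overline U_k,V_k,\overline V_k$ follow from the stated hypotheses, which they do via $\|U_k^*\|=\|\overline U_k^*\|=\|V(\bm c^*)\|=1$. We must also keep the constant at exactly $8n$: a factor $2$ from the sum of the two products, a factor $2$ from the column bounds, a factor $2$ from the two matrix differences, and $n$ from the entrywise-to-spectral passage.
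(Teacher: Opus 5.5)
Your proof is correct and follows the paper's argument. It uses the same ingredients:
\begin{itemize}
\item the bounds $\|U_k\|,\|V_k\|,\|\overline U_k\|,\|\overline V_k\|\le 2$ from \eqref{defuv} and \eqref{defzn};
\item the difference identities from \eqref{defouv} and \eqref{uvk+1};
\item the entrywise splitting together with the factor $n$.
\end{itemize}

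The only cosmetic difference is the order of telescoping. The paper first bounds $U_l-U_0$ and $V_l-V_0$ and then estimates $J_l-J_0$ in a single step. You instead telescope $J$ itself through the intermediate matrices $\overline J_k$. Both routes give the same constant $8n$.

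You are also right that the second maximum in \eqref{esj_l-j_0} should read $\max\{\|\overline E_k\|,\|\overline F_k\|\}$; this is exactly what the paper's own derivation produces.
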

\begin{proof}	
	Recalling that $\{U_k^*\}$, $\{\overline{U}_k^*\}$ and $V(\bm{c}^*)$ are orthogonal, we have by \eqref{defuv} and \eqref{defzn} that
	\begin{equation}\label{ukvk}
		\|U_{k}\|	 \leq 1+\|X_k\|,\quad\|V_{k}\|	 \leq 1+\|Y_k\|, \quad k\in\mathbb{N}
	\end{equation}
	and
	\begin{equation*}
		\|\overline{U}_{k}\|	 \leq 1+\|E_k\|,\quad\|\overline{V}_{k}\|	 \leq 1+\|F_k\|, \quad k\in\mathbb{N}.
	\end{equation*}
	Then, thanks to the assumptions, one sees
	\begin{equation}\label{uvuv}
		\max\left\{ \|U_{k}\| ,\;\|\overline{U}_{k}\|,\; \|V_{k}\| ,\;\|\overline{V}_{k}\|  \right\} \leq 2,\quad 0\leq k\leq l.
	\end{equation}
	Moreover, by \eqref{defouv} and \eqref{uvk+1}, one has that for each $ k\in\mathbb{N}$,
	\begin{equation}\label{uk-ukover}
		\overline{U}_k-U_{k}=-U_{k}\overline{X}_{k},\quad\overline{V}_k-V_{k}=-V_{k}\overline{Y}_{k}
	\end{equation}
	and
	\begin{equation*}
		U_{k+1}-\overline{U}_k=-\overline{U}_k\overline{E}_k, \quad V_{k+1}-\overline{V}_k=-\overline{V}_k\overline{F}_k.
	\end{equation*}
	Thus, using  \eqref{uvuv}, we deduce
	\begin{equation}\label{esuk+k-uk}
		\Vert U_{k+1}-U_k\Vert\leq\Vert U_{k+1}-\overline{U}_k\Vert+\Vert \overline{U}_k-U_{k}\Vert\leq 2\left(\Vert \overline{X}_k\Vert+\Vert \overline{E}_{k}\Vert \right),\quad 0\leq k\leq l
	\end{equation}
	and
	\begin{equation}\label{esvk+k-vk}
		\Vert V_{k+1}-V_k\Vert\leq\Vert V_{k+1}-\overline{V}_k\Vert+\Vert \overline{V}_k-V_{k}\Vert\leq 2\left(\Vert \overline{Y}_k\Vert+\Vert \overline{F}_{k}\Vert \right),\quad 0\leq k\leq l.
	\end{equation}
	This gives
	\begin{equation}\label{Ul}
		\Vert U_l-U_0\Vert
		\leq \sum\limits_{k=0}^{l-1}\Vert U_{k+1}-U_k\Vert
		\leq  2\sum\limits_{k=0}^{l-1}(\|\overline{X}_k\|+\|\overline{E}_k\|)	
	\end{equation}	
	and
	\begin{equation}\label{Vl}
		\Vert V_l-V_0\Vert
		\leq \sum\limits_{k=0}^{l-1}\Vert V_{k+1}-V_k\Vert
		\leq  2\sum\limits_{k=0}^{l-1}(\|\overline{Y}_k\|+\|\overline{F}_k\|).		
	\end{equation}	
	By \eqref{defjk} and \eqref{uvuv}, we have
	\begin{equation*}
		\begin{split}
			\vert [J_l-J_0]_{ij}\vert &=\vert ({\bm{u}_i^l})^TA_j\bm{v}_i^l-({\bm{u}_i^0})^TA_j\bm{v}_i^0\vert\\
			&=\vert (\bm{u}_i^l-\bm{u}_i^0)^TA_j\bm{v}_i^l+({\bm{u}_i^0})^TA_j(\bm{v}_i^l-\bm{v}_i^0)\vert\\
			&\leq 2\Vert A_j\Vert(\Vert\bm{u}_i^l-\bm{u}_i^0\Vert+\Vert \bm{v}_i^l-\bm{v}_i^0\Vert),\quad 1\leq i,\ j\leq n.
		\end{split}
	\end{equation*}
	Combining this with \eqref{Ul} and \eqref{Vl}, one has
	\begin{align}
		\Vert J_l-J_0\Vert&\leq 4n\max\limits_j\Vert A_j\Vert\cdot \max\left\{ \Vert V_l-V_0\Vert,\Vert U_l-U_0\Vert \right\} \nonumber\\
		& \leq 8n \max\limits_j\Vert A_j \Vert\sum\limits_{k=0}^{l-1}(\max\left\{\|\overline{X}_k\|, \|\overline{Y}_k\| \right\} + \max\left\{\|\overline{E}_k\|, \|\overline{F}_k\| \right\}).\label{esallj_l-j_0}
	\end{align}
	That is, \eqref{esj_l-j_0} holds.  Note by \eqref{uk-ukover} (with $k=l$) that
	$\overline{U}_l-U_{l}=-U_{l}\overline{X}_{l}$ and $\overline{V}_l-V_{l}=-V_{l}\overline{Y}_{l}.$
	Then, using a similar arguments for \eqref{esallj_l-j_0}, one checks
	\begin{equation*}
		\begin{split}
			\| J_l-\overline{J}_l\|  \leq 8n \max\limits_j\Vert A_j\Vert\cdot \max\left\{ \| \overline{X}_l\|,\| \overline{Y}_l\| \right\}. \nonumber
		\end{split}
	\end{equation*}
	This shows \eqref{esj_l-oj_l}. It remains to prove \eqref{esj_l+1-j_l}. For this end, we assume that
	$ \max\left\{\Vert X_{l+1} \Vert,\Vert Y_{l+1} \Vert\right\} \leq 1$. Then, it follows from \eqref{ukvk} that
	\begin{equation}\nonumber
		\max\left\{ \|U_{l+1}\|,\; \|V_{l+1}\| \right\} \leq 2.
	\end{equation}
	Thus, using a similar arguments for \eqref{esallj_l-j_0},  we obtain by \eqref{uvuv}, \eqref{esuk+k-uk} and \eqref{esvk+k-vk} (with $k=l$) that
	\begin{equation}\nonumber
		\Vert J_{l+1}-J_l \Vert \leq 8n \max\limits_j\Vert A_j \Vert(\max\left\{ \|\overline{X}_k\|,\|\overline{Y}_k\| \right\}+\max\left\{ \|\overline{E}_k\|,\|\overline{F}_k\| \right\}).
	\end{equation}
	Therefore, \eqref{esj_l+1-j_l} holds and the proof is complete.
	
\end{proof}

Now we present the main theorem of this paper, which shows that the sequence $\{\bm{c}^k\}$ generated by \autoref{improved algorithm Ulm} converges  to the solution $\bm{c}^*$ with root-convergence rate at least cubically.
\begin{theorem}
	Suppose that $\bm{c}^*$ is a solution of the ISVP \eqref{iep} and  $J(\bm{c}^*)$ is nonsingular. Then there exist positive constants $ r $ and  $ \mu $ such that, for each $\bm{c}^0\in B(\bm{c}^*,r)$ and each $ B_0\in\mathbb{R}^{n\times n}$ satisfying
	\begin{equation}\label{I-BoJo}
		\Vert I-B_0J_0\Vert \leq \mu,
	\end{equation}
	the sequence $\{\bm{c}^k\}$ generated by \autoref{improved algorithm Ulm} is well-defined and converges to $\bm{c}^*$ with cubic root-convergence. Furthermore, we have
	\begin{equation*}
		\Vert \bm{c}^k-\bm{c}^* \Vert \leq Lr\left(\frac{1}{2}\right)^{3^k},
	\end{equation*}
	where $L$ is a positive constant.
\end{theorem}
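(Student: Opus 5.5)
The plan is an induction on $k$ that tracks four error quantities simultaneously:
\[
\epsilon_k:=\|\bm{c}^k-\bm{c}^*\|,\qquad \delta_k:=\max\{\|X_k\|,\|Y_k\|\},\qquad \omega_k:=\|I-B_kJ_k\|,\qquad \beta_k:=\|B_k\|.
\]
The target is the hypothesis
\[
\epsilon_k+\delta_k\le Lr\left(\tfrac12\right)^{3^k},\qquad \omega_k\le \mu\left(\tfrac12\right)^{3^k}\ (\text{or } C\,(\epsilon_k+\delta_k)),\qquad \|B_k\|\le 2\eta_1 .
\]
It is convenient to take $L$ large compared with the constants $\eta_1,\eta_2$ and the quantities $\|\Sigma^*\|$, $\max_j\|A_j\|$. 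The radius $r$ and the parameter $\mu$ are then chosen small enough that three things hold: every iterate stays in $B(\bm{c}^*,r_1)$ of Lemma \ref{lemma j<k}; the smallness conditions of Lemma \ref{Q-Q<A-A} are satisfied; and $\max\{\|X_k\|,\|Y_k\|,\|E_k\|,\|F_k\|\}\le 1$ throughout, so that Lemmas \ref{lemma impront}--\ref{esj_x-j_y} apply.

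For the base case, Lemma \ref{Q-Q<A-A} gives $\delta_0\le \eta_0\max_j\|A_j\|\sqrt n\,\epsilon_0$, since $U_0,V_0$ are exact singular vector matrices of $A(\bm{c}^0)$. The same lemma gives $\|J_0-\boldsymbol f'(\bm{c}^*)\|=O(\epsilon_0)$, and Lemma \ref{lemma AB-1} then bounds $\|B_0\|$ through \eqref{I-BoJo}.

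The inductive step has three parts.

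\textbf{First half-step.} Use the identity
\[
\overline{\bm c}^k-\bm{c}^*=(I-B_kJ_k)(\bm{c}^k-\bm{c}^*)-B_k(J_k\bm{c}^*+\bm b^k).
\]
Combined with Lemma \ref{esalljk+bk}, it gives
\[
\|\overline{\bm c}^k-\bm c^*\|\le \omega_k\epsilon_k+2\beta_k\sqrt n\|\Sigma^*\|\delta_k^2 .
\]
Lemma \ref{lemma impront}(i) then yields $\|E_k\|,\|F_k\|\lesssim \delta_k^2+\|\overline{\bm c}^k-\bm c^*\|$, and also $\|\overline X_k\|,\|\overline Y_k\|\lesssim \delta_k$.

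\textbf{Second half-step.} The analogous identity is
\[
\bm c^{k+1}-\bm c^*=(I-B_k\overline J_k)(\overline{\bm c}^k-\bm c^*)-B_k(\overline J_k\bm c^*+\overline{\bm b}^k),
\]
which holds because $\boldsymbol\rho^k=\overline J_k\overline{\bm c}^k+\overline{\bm b}^k$. Write $I-B_k\overline J_k=(I-B_kJ_k)+B_k(J_k-\overline J_k)$ and apply \eqref{esj_l-oj_l} and \eqref{esojk+obk}. This gives
\[
\epsilon_{k+1}\lesssim (\omega_k+\delta_k)\,\|\overline{\bm c}^k-\bm c^*\|+\|E_k\|^2+\|F_k\|^2 .
\]
Lemma \ref{lemma impront}(ii) then gives $\delta_{k+1}\lesssim \|E_k\|^2+\|F_k\|^2+\epsilon_{k+1}$. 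Collecting terms, $\epsilon_{k+1}+\delta_{k+1}$ is bounded by sums of products of three factors drawn from $\{\epsilon_k,\delta_k,\omega_k\}$, for instance $\omega_k^2\epsilon_k$, $\omega_k\delta_k\epsilon_k$, and $\delta_k^4$ (which is dominated by a cubic term).

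\textbf{Chebyshev update.} The standard identity
\[
I-B_{k+1}J_{k+1}=(I-J_{k+1}B_k)^3\quad\text{(up to a similarity by }B_k\text{)}
\]
gives
\[
\omega_{k+1}\lesssim \bigl(\omega_k+\beta_k\|J_{k+1}-J_k\|\bigr)^3 ,
\]
and $\|J_{k+1}-J_k\|=O(\delta_k+\|\overline E_k\|+\|\overline F_k\|)=O(\delta_k+\epsilon_k)$ by \eqref{esj_l+1-j_l}. This is exactly what makes the errors cube: every quantity at level $k+1$ is $O\bigl((\epsilon_k+\delta_k+\omega_k)^3\bigr)$. Since $s_k:=\epsilon_k+\delta_k+\omega_k$ is small, $C s_k^3\le$ target follows provided $s_0\le \min\{r,\mu\}$ is small relative to $C^{-1/2}$. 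The bound $\|B_{k+1}\|\le 2\eta_1$ follows from $\omega_{k+1}<1/2$ together with Lemma \ref{lemma j<k}: write $B_{k+1}=(I-(I-B_{k+1}J_{k+1}))J_{k+1}^{-1}$, where $J_{k+1}$ is close to $\boldsymbol f'$ at a nearby point.

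\textbf{Main obstacle.} The hardest step is keeping $J_k$ invertible with a uniformly bounded inverse. Since $U_k$ are not exact singular vectors, $\|J_k-\boldsymbol f'(\bm c^*)\|$ must be controlled. Here I would use \eqref{esj_l-j_0}: it gives $\|J_k-J_0\|$ as a telescoping sum of $\|\overline X_j\|,\|\overline E_j\|,\dots$, which is at most a constant times the geometric sum $\sum_j Lr(1/2)^{3^j}\le 2Lr$. Combined with $\|J_0-\boldsymbol f'(\bm c^*)\|=O(r)$ and Lemma \ref{lemma AB-1}, this yields $\|J_k^{-1}\|\le 2\eta_1$. The bookkeeping of constants needs care, so that one choice of $L$, $r$, $\mu$ closes the induction. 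Finally, $\epsilon_k\le Lr(1/2)^{3^k}$ gives $R_3\le (1/2)$ times a constant limit, hence $O_R\ge 3$.
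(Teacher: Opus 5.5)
Your proposal is correct and follows the paper's proof essentially step for step. The paper runs the same induction, bounding $\|\bm{c}^k-\bm{c}^*\|$, $\max\{\|X_k\|,\|Y_k\|\}$ and $\|I-B_kJ_k\|$ all by $Lr(\frac12)^{3^k}$ (taking $\mu\le r$ and $L\ge 2$ to cover $k=0$) and additionally carrying $\|\overline{\bm{c}}^k-\bm{c}^*\|\le Lr(\frac12)^{2\cdot 3^k}$; it uses the same two error identities, the Chebyshev identity, and the telescoping bound on $\|J_k-J_0\|$ with Lemma \ref{lemma AB-1} to keep $\|J_k^{-1}\|\le 2\eta_1$, whence $\|B_k\|\le 4\eta_1$ rather than your $2\eta_1$.

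One small correction: the Chebyshev identity holds exactly as $I-B_{k+1}J_{k+1}=(I-B_kJ_{k+1})^3$, with no similarity needed. This is precisely what yields your bound $\omega_{k+1}\le(\omega_k+\|B_k\|\,\|J_{k+1}-J_k\|)^3$.
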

\begin{proof}Let $r_0$ and $\eta_0$ be determined by \autoref{Q-Q<A-A}.
	According to \autoref{lemma j<k},  there exist positive numbers $ r_1 $ and $ \eta_1\geq 1 $ such that the following implication holds:
	\begin{equation}\label{imp}
		\bm{c}\in B(\bm{c}^*,r_1)\Longrightarrow  {\boldsymbol{f}'(\bm{c})}^{-1} \;{\rm  exists\; and}\;\Vert {\boldsymbol{f}'(\bm{c})}^{-1}  \Vert \leq \eta_1.
	\end{equation}
	Let  $\eta_2>1$ be defined as in \autoref{lemma impront}.
	For simplicity, we write
	\begin{equation}\nonumber
		\begin{aligned}
			\tau_1:=72\sqrt{m}\eta_2^2\Vert \Sigma^* \Vert,\quad \tau_2:=16n\eta_1(1+8{\eta}_2+5\eta_2^3)\max\limits_j\Vert A_j \Vert .\\
		\end{aligned}
	\end{equation}
	Set 	\begin{equation}\label{Ldefinition}L:=\left\{2,\ 2m\eta_0\max\limits_j\Vert A_j\Vert \right\}\end{equation}
	and
	\begin{align}\label{delta}
		r:=\min\biggl\{r_1,\; r_0, \;\frac{r_0}{\sqrt{m}\max\limits_j\Vert A_j \Vert },\;
		\frac{1}{3L\eta_2(1+2\tau_2+4\eta_1\tau_1+72\eta_2^2)^3}
		\biggr\}.
	\end{align}
	Let $0\leq \mu \leq r$. Obviously, $\mu<1$ as $r<1$ by definitions.
	
	Below we shall show that $L$, $r$ and $\mu$ are as desired. For this end,
	let $\bm{c}^0\in B(\bm{c}^*,r)$ and  $ B_0\in\mathbb{R}^{n\times n}$ such that
	\eqref{I-BoJo} holds.
	To complete the proof, it suffices  to prove the following inequalities hold for each $k\in\mathbb{N}$:	
	
	\begin{equation}\label{maxforck-c^*}
		\|\bm{c}^k-\bm{c}^*\|\leq Lr \left(\frac{1}{2}\right)^{3^k},
	\end{equation}
	\begin{equation}\label{maxforxy}
		\max\left\{ \|X_k\|,\;\|Y_k\|   \right\} \leq Lr \left(\frac{1}{2}\right)^{3^k},
	\end{equation}
	
	\begin{equation}\label{maxfori-bj}
		\Vert I-B_kJ_k\Vert\leq Lr \left(\frac{1}{2}\right)^{3^k},
	\end{equation}
	\begin{equation}\label{maxforoc-c^*}
		\Vert \overline{\bm{c}}^k-\bm{c}^* \Vert \leq Lr \left(\frac{1}{2}\right)^{2 \cdot 3^k}.
	\end{equation}
	Clearly, \eqref{maxforck-c^*} and \eqref{maxfori-bj} hold for $k=0$  by the facts that $\mu \leq r $ and $L\ge2$. To prove \eqref{maxforxy} for $k=0$, we first note by \eqref{defuv} (with $k=0$)
	and the orthogonality of $ U^*_0,\;V(\bm{c}^*) $ that
	\begin{equation*}
		\Vert {X}_0 \Vert_F =\Vert U^*_0{X}_0 \Vert_F=\Vert U_0-U^*_0 \Vert_F\quad{\rm and}	\quad\Vert {Y}_0 \Vert_F =\Vert V(\bm{c}^*){Y}_0 \Vert_F=\Vert V_0-V(\bm{c}^*) \Vert_F.
	\end{equation*}
	Then, it follows that
	\begin{equation}\label{U0}
		\Vert X_0\Vert\leq \Vert X_0\Vert_F\leq \sqrt{m} \Vert U_0-U^*_0\Vert\quad{\rm and}	\quad\Vert Y_0\Vert\leq \sqrt{n} \Vert V_0-V(\bm{c}^*) \Vert.
	\end{equation}
	Moreover, by \eqref{A} and the definition of $r$, one checks
	\begin{equation}\nonumber
		\|A(\bm{c}^0)-A(\bm{c}^*)\|_{F} \leq \sqrt{m}\|A(\bm{c}^0)-A(\bm{c}^*)\| \leq \sqrt{m} \max_{j}\|A_j\|\cdot \|\bm{c}^0-\bm{c}^*\| \leq r_0.
	\end{equation}
	Thus, \autoref{Q-Q<A-A} is applicable (with $A(\bm{c}^0)$, $A(\bm{c}^*)$, $U_0$, $U_0^*$,  $V_0$, $V(\bm{c}^*)$ in place of $A$, $\overline{A}$, $U$, $\overline{U}$,  $V$, $\overline{V}$) to conclude that
	\begin{equation*}
		\Vert U_0-U_0^* \Vert_{F} \leq \eta_0\|A(\bm{c}^0)-A(\bm{c}^*)\|_{F}\leq \sqrt{m}\eta_0 \max_{j}\|A_j\|\cdot \|\bm{c}^0-\bm{c}^*\|
	\end{equation*}
	and
	\begin{equation*}
		\Vert V_0-V(\bm{c}^*) \Vert_{F} \leq \eta_0\|A(\bm{c}^0)-A(\bm{c}^*)\|_{F}\leq \sqrt{m} \eta_0 \max_{j}\|A_j\|\cdot \|\bm{c}^0-\bm{c}^*\|.
	\end{equation*}
	Hence, thanks to \eqref{U0}, \eqref{Ldefinition} and the assumption  $\bm{c}^0\in B(\bm{c}^*,r)$, we obtain
	\begin{align}\nonumber
		\max\left\{\|X_0\|,\;\|Y_0\| \right\} \leq \frac{1}{2}Lr.
	\end{align}
	That is, \eqref{maxforxy} holds for $k=0$. Moreover, by \autoref{esalljk+bk}, we get
	\begin{equation}\label{Jmc*-d^m}
		\Vert J_0\bm{c}^*+\bm{b}^0\Vert \leq 2\sqrt{n}\Vert \Sigma^* \Vert \max\left\{{\|X_0\|}^2,{\|Y_0\|}^2 \right\}\leq \frac{1}{2}\sqrt{n}\Vert \Sigma^* \Vert L^2r ^2.
	\end{equation}Since $ J_0=f'(\bm{c}^0)$ and that $\bm{c}^0\in B(\bm{c}^*,r)\subset B(\bm{c}^*,r_1)$, we have by  \eqref{imp} that
	$J^{-1}_0$ exists and	\begin{equation}\label{J0-1}
		\Vert J^{-1}_0 \Vert \leq \eta_1.
	\end{equation}
	Then, thanks to  $ \eqref{I-BoJo} $ and the fact that $\mu\le1$, we have
	\begin{equation}\label{B0}
		\Vert B_0 \Vert \leq \Vert B_0J_0 \Vert\cdot \Vert J_0^{-1} \Vert \leq (1+\Vert I-B_0J_0 \Vert)\Vert J_0^{-1} \Vert \leq (1+\mu)\eta_1\leq 2\eta_1.\\
	\end{equation}
	Note that
	\begin{equation*}
		\overline{\bm{c}}^0-\bm{c}^*=\bm{c}^0-\bm{c}^*-B_0(J_0\bm{c}^0+\bm{b}^0)=(I-B_0J_0)(\bm{c}^0-\bm{c}^*)-B_0(J_0\bm{c}^*+\bm{b}^0).
	\end{equation*}
	Thus, using $\eqref{Jmc*-d^m} $, \eqref{B0} and the assumption $\bm{c}^0\in B(\bm{c}^*,r)$, one checks
	\begin{align}
		\Vert \overline{\bm{c}}^0-\bm{c}^* \Vert
		&\leq \Vert I-B_0J_0\Vert \cdot \Vert \bm{c}^0-\bm{c}^* \Vert + \Vert B_0 \Vert \cdot \Vert J_0\bm{c}^*+\bm{b}^0 \Vert\nonumber
		\leq(\frac{1}{4}+\eta_1\sqrt{n} \Vert \Sigma^* \Vert)L^2r^2.\label{c0-c^*}
	\end{align}
	Hence, \eqref{maxforoc-c^*} holds for $k=0$ as, by
	the definitions of $r$ and $\tau_1$,
	\begin{equation}\nonumber
		Lr \leq  \frac{1}{3\eta_2(1+2\tau_2+4\eta_1\tau_1+72\eta_2^2)^3} \leq \frac{1}{1+4\eta_1\sqrt{n} \Vert \Sigma^* \Vert} .
	\end{equation}

	Now we assume that \eqref{maxforck-c^*}--\eqref{maxforoc-c^*} hold for each $0\leq k\leq l$. Below we show that \eqref{maxforck-c^*}--\eqref{maxforoc-c^*}  hold for $k=l+1$. In fact, by \eqref{delta} and the inductive assumptions, one has
	\begin{equation}\nonumber
		\max\left\{{\|X_k\|},{\|Y_k\|} \right\}\leq Lr\left(\frac{1}{2}\right)^{3^{k}} \leq \frac12Lr \leq 1, \quad 0\leq k\leq l.
	\end{equation}
	Then, \autoref{lemma impront}(i) is applicable to conclude that
	\begin{equation*}
		\max\left\{ \|X_{k}-\overline{X}_{k}\|,\;\|Y_{k}-\overline{Y}_{k}\| \right\}\leq \eta_2({\|X_k\|}^2+{\|Y_k\|}^2+\Vert \overline{\bm{c}}^k-\bm{c}^* \Vert), \quad 0\leq k\leq l
	\end{equation*}
	and
	\begin{equation}\label{m4}
		\max\left\{ \|E_k\|,\;\|F_k\| \right\}\leq 3\eta_2({\|X_k\|}^2+{\|Y_k\|}^2+\Vert \overline{\bm{c}}^k-\bm{c}^* \Vert), \quad 0\leq k\leq l.
	\end{equation}
	Thus, by the inductive assumptions of \eqref{maxforxy} and \eqref{maxforoc-c^*}, we have
	\begin{equation}\begin{array}{lll}\label{wanFk}
			\max\left\{ \Vert\overline{X}_{k}\Vert,\;\Vert\overline{Y}_{k}\Vert \right\}
			&\leq\max\left\{ \|X_{k}\|+\|X_{k}-\overline{X}_{k}\|,\|Y_{k}\|+\|Y_{k}-\overline{Y}_{k}\|\right\}\\
			&\leq Lr(1+\eta_2(\frac{1}{2})^{3^{k}} +2\eta_2 Lr{(\frac{1}{2})}^{3^{k}}){(\frac{1}{2})}^{3^{k}}\\
			&\leq (1+2\eta_2)Lr\left(\frac{1}{2}\right)^{3^{k}}, \quad  0\leq k\leq l,
	\end{array}\end{equation}
	where the last inequality holds because of the fact that $Lr\leq1$.
	Note by the definition of $r$ that
	\begin{equation}\label{Lr13}
		r\leq\frac{1}{3L\eta_2(1+2\tau_2+4\eta_1\tau_1+72\eta_2^2)^3}\leq\frac{1}{3L\eta_2}\le\frac{1}{3L}.\nonumber
	\end{equation}
	Thus, by the inductive assumptions of \eqref{maxforxy} and \eqref{maxforoc-c^*}, we derive from \eqref{m4} that
	\begin{equation}\label{Gk}
		\max\left\{{\|E_k\|},{\|F_k\|} \right\} \leq   3\eta_2 Lr(1+2Lr)\left(\frac{1}{2}\right)^{2\cdot 3^{k}}<1,\quad 0\leq k\leq l.
	\end{equation}
	Hence, applying \autoref{lemma impront}(ii), one sees that  \eqref{ek-overek} and \eqref{xk+1yk+1} hold for each $0\leq k\leq l$.
	Therefore, using  \eqref{Gk} and the inductive assumption of \eqref{maxforck-c^*}, we deduce
	\begin{align}
		\max\left\{ \Vert\overline{E}_{k}\Vert,\Vert\overline{F}_{k}\Vert \right\}\nonumber
		&\leq\max\left\{ \|E_{k}-\overline{E}_{k}\|,\|F_{k}-\overline{F}_{k}\|\right\}+\max\left\{ \|E_{k}\|,\|F_{k}\|\right\}\nonumber\\
		&\leq\eta_2 \left[
		\left(\frac{1}{2}\right)^{3^{k}}+3(1+2Lr)+18\eta_2^2Lr(1+2Lr)^2\left(\frac{1}{2}\right)^{2 \cdot 3^{k}}\right]
		Lr\left(\frac{1}{2}\right)^{ 2 \cdot 3^{k}}\nonumber\\
		&\leq \eta_2(6+5\eta_2^2)Lr\left(\frac{1}{2}\right)^{3^{k}},\quad0\leq k\leq l-1.\label{Fm-1}
	\end{align}
	where the last inequality holds because of the fact that $Lr \leq \frac{1}{3}$ (see \eqref{Lr13}). Thanks to \eqref{Gk}, \eqref{maxforxy} and  \autoref{esj_x-j_y}, one sees
	\begin{equation}\nonumber
		\Vert J_l-J_0\Vert
		\leq 8n \max\limits_j\Vert A_j \Vert\sum\limits_{k=0}^{l-1}(\max\left\{\|\overline{X}_k\|, \|\overline{Y}_k\| \right\} + \max\left\{\|\overline{E}_k\|, \|\overline{F}_k\| \right\})
	\end{equation}
	and
	\begin{equation}\label{j_l=oj_l}
		\Vert J_l- \overline{J}_l \Vert\leq 8n\max\limits_j\Vert A_j \Vert\cdot\max\left\{ \Vert \overline{X}_l\Vert,\Vert \overline{Y}_l \Vert \right\}.
	\end{equation}
	Thus, by \eqref{J0-1}, \eqref{wanFk}, \eqref{Fm-1} and the  definition of $r$, we deduce
	\begin{equation*}
		\Vert J_0^{-1}\Vert\cdot \Vert J_l-J_0\Vert\leq  8n\eta_1(1+8\eta_2+5\eta_2^3)\max\limits_j\Vert A_j\Vert Lr \leq \frac{1}{2}\tau_2Lr \leq \frac12.
	\end{equation*}
	Thus, \autoref{lemma AB-1} is applicable to conclude that $J_l^{-1}$ exists and
	\begin{equation}\label{Jlinv}
		\Vert J_l^{-1} \Vert \leq \frac{\Vert J_0^{-1} \Vert}{1-\Vert J_0^{-1} \Vert\cdot\Vert J_l-J_0 \Vert}\leq 2\eta_1.
	\end{equation}
	Hence, using the inductive assumption of \eqref{maxfori-bj} and noting the fact that $Lr\le2$, we have
	\begin{equation}\label{bm}
		\Vert B_l \Vert \leq \Vert B_lJ_l \Vert\cdot \Vert J_l^{-1} \Vert \leq (1+\Vert I-B_lJ_l \Vert)\Vert J_l^{-1} \Vert \leq 4\eta_1,\\
	\end{equation}
	which together with \eqref{j_l=oj_l} gives
	\begin{align*}
		\Vert I-B_l \overline{J}_l \Vert &\leq \Vert I-B_lJ_l \Vert +\Vert B_l \Vert \cdot \Vert J_l- \overline{J}_l \Vert \\
		&\leq \Vert I-B_lJ_l \Vert +32n\eta_1\max\limits_j\Vert A_j \Vert\max\left\{ \Vert \overline{X}_l\Vert,\Vert \overline{Y}_l \Vert \right\}.
	\end{align*}
	Substituting  \eqref{maxfori-bj} (with $k=l$) and  \eqref{wanFk} into the above inequality, one has
	\begin{align}
		\Vert I-B_l \overline{J}_l \Vert
		&\leq (1+32n\eta_1(1+2\eta_2)\max\limits_j\Vert A_j \Vert)Lr\left(\frac{1}{2}\right)^{3^l} \nonumber\\
		&\leq (1+2\tau_2)Lr\left(\frac{1}{2}\right)^{3^l}.\label{i-bmj2m}
	\end{align}
	where the last inequality holds because of the fact that  $\tau_2=16n\eta_1(1+8\eta_2+5\eta_2^3)\max\limits_j\Vert A_j \Vert \ge 16n\eta_1(1+2\eta_2)\max\limits_j\Vert A_j \Vert$.
	On the other hand, by \autoref{esalljk+bk}, \eqref{Gk} and the definition of $\tau_1$, we get
	\begin{align}
		\Vert \overline{J}_l\bm{c}^*+\overline{\bm{b}}^l\Vert
		&	\leq 2\sqrt{n} \|\Sigma^* \| \max\left\{{\|E_l\|}^2,{\|F_l\|}^2 \right\}\nonumber\\
		&	\leq 18\sqrt{n}\eta_2^2 \|\Sigma^*\| (1+2Lr)^2L^2r^2\left(\frac{1}{2}\right)^{4\cdot3^l}\nonumber\\
		&	\leq \tau_1 L^2r^2 \left(\frac{1}{2}\right)^{4\cdot3^l}.\label{esojs+bs}
	\end{align}
	Then, noting that
	\begin{equation*}
		\bm{c}^{l+1}-\bm{c}^*=\overline{\bm{c}}^l-\bm{c}^*-B_l\boldsymbol{\rho}^l
		=(I-B_l \overline{J}_l)(\overline{\bm{c}}^l-\bm{c}^*)-B_l( \overline{J}_l\bm{c}^*+\overline{\bm{b}}^l),
	\end{equation*}
	we have  by \eqref{maxforoc-c^*} (with $k=l$) and \eqref{bm}--\eqref{esojs+bs} that
	\begin{align}
		\Vert \bm{c}^{l+1}-\bm{c}^* \Vert
		&\leq (1+2\tau_2)L^2r^2\left(\dfrac{1}{2}\right)^{3^{l+1}} +4\eta_1\tau_1 L^2r^2 \left(\dfrac{1}{2}\right)^{4\cdot3^l}\nonumber\\
		&\leq (1+2\tau_2+4\eta_1\tau_1)L^2r^2\left(\dfrac{1}{2}\right)^{3^{l+1}}.\label{cm+1-c*}
	\end{align}
	Hence, \eqref{maxforck-c^*} is seen to  hold for $k=l+1$ by definition of $r$. Therefore, in view of
	\eqref{xk+1yk+1} (with $k=l$), \eqref{Gk} and \eqref{cm+1-c*}, we get
	\begin{align}
		\max\left\{ \|X_{l+1}\| ,\|Y_{l+1}\|  \right\}
		&\leq 3\eta_2 ({\|E_l\|}^2+{\|F_l\|}^2 + \|\bm{c}^{l+1} - \bm{c}^*\|)\nonumber \\
		&\leq 3\eta_2(18\eta_2^2(1+2Lr)^2+(1+2\tau_2+4\eta_1\tau_1))L^2r^2\left(\dfrac{1}{2}\right)^{3^{l+1}}.\nonumber	
	\end{align}
	This shows \eqref{maxforxy} for $k=l+1$ (noting $Lr<\frac13$ and  $3\eta_2\left (1+2\tau_2+4\eta_1\tau_1+72\eta_2^2\right )Lr<1$ by  the definition of $r$). Noting that \eqref{ek-overek} holds for $k=l$, by \eqref{maxforck-c^*}(with $k=l+1$) and \eqref{Gk}, we get that
	\begin{align}
		\max\left\{ \Vert\overline{E}_{l}\Vert,\Vert\overline{F}_{l}\Vert \right\}\nonumber
		&\leq\max\left\{ \|E_{l}-\overline{E}_{l}\|,\|F_{l}-\overline{F}_{l}\|\right\}+\max\left\{ \|E_{l}\|,\|F_{l}\|\right\}\nonumber\\
&\leq\eta_2 \left[
		\left(\frac{1}{2}\right)^{3^{l}}+3(1+2Lr)+18\eta_2^2Lr(1+2Lr)^2\left(\frac{1}{2}\right)^{2 \cdot 3^{l}}\right]
		Lr\left(\frac{1}{2}\right)^{ 2 \cdot 3^{l}}\nonumber\\
&\leq \eta_2(6+5\eta_2^2)Lr\left(\frac{1}{2}\right)^{3^{l}}.\label{esel+1}
	\end{align}
	Furthermore, one sees $\max\left\{ \|X_{l+1}\| ,\|Y_{l+1}\|  \right\} \leq 1$.  Thus, \autoref{esj_x-j_y} is applicable and so,
	\begin{equation*}
		\Vert J_{l+1}-J_l \Vert \leq 8n \max\limits_j\Vert A_j \Vert(\max\left\{ \|\overline{X}_l\|,\|\overline{Y}_l\| \right\}+\max\left\{ \|\overline{E}_l\|,\|\overline{F}_l\| \right\}),
	\end{equation*}which together with
	\eqref{wanFk} (with $k=l$) and \eqref{esel+1} yields
	\begin{equation}\label{Jm+1-Jm}
		\Vert J_{l+1}-J_l \Vert \leq 8n(1+8\eta_2+5\eta_2^3)\max\limits_j\Vert A_j \Vert Lr\left(\frac{1}{2}\right)^{3^l} .
	\end{equation}
	Noting that $B_{l+1}=B_l+B_l(2I-J_{l+1}B_l)(I-J_{l+1}B_l)$, we derive
	\begin{equation*}
		I-B_{l+1}J_{l+1}=(I-B_lJ_{l+1})^3=\left (I-B_lJ_l+B_l(J_l-J_{l+1})\right )^3.
	\end{equation*}
	Thus, by \eqref{maxfori-bj} (with $k=l$), \eqref{bm} and \eqref{Jm+1-Jm} , we obtain
	\begin{align}
		\Vert I-B_{l+1}J_{l+1} \Vert &\leq (1+32n\eta_1(1+8\eta_2+5\eta_2^3)\max\limits_j\Vert A_j \Vert)^3L^3r^3\left(\frac{1}{2}\right)^{3^{l+1}} \nonumber\\
		&=(1+2\tau_2)^3L^3r^3\left(\frac{1}{2}\right)^{3^{l+1}},\nonumber
	\end{align}
	where the equality holds because of the definition of $\tau_2$.  Therefore, \eqref{maxfori-bj} holds for $ k=l+1$ as
	\begin{equation}
		Lr \leq \frac{1}{3L\eta_2(1+2\tau_2+4\eta_1\tau_1+72\eta_2^2)^3}\leq \frac{1}{(1+2\tau_2)^3} \leq 1\nonumber
	\end{equation}
	(noting the definition of $r$). In order to prove \eqref{maxforoc-c^*}  for $ k=l+1 $, we first note by  \eqref{Jlinv}, \eqref{Jm+1-Jm} and the definitions of $\tau_2$, $r$ that
	\begin{equation*}
		\Vert J_{l}^{-1}\Vert\cdot\Vert J_{l+1}-J_l \Vert \leq 16n\eta_1(1+8\eta_2+5\eta_2^3)\max\limits_j\Vert A_j \Vert Lr\left(\frac{1}{2}\right)^{3^l}=\tau_2 Lr\left(\frac{1}{2}\right)^{3^l}\le\frac12,
	\end{equation*}
	and so \autoref{lemma AB-1} is applicable to conclude that   $J_{l+1}^{-1}$ exists and
	$\|J_{l+1}^{-1}\|\le 4\eta_1$. Then, using a similar arguments for $k=0$, one can easily check that \eqref{maxforoc-c^*} holds for $ k=l+1 $.
	Thus, we have established the validity of  \eqref{maxforck-c^*}--\eqref{maxforoc-c^*}  for each $k\ge0$.
	
	It remains to prove  the cubic root-convergence rate of the proposed algorithm. In fact, based on  \eqref{maxforck-c^*}, we can directly derive the root-convergence rate of the sequence $\{\bm{c}^k\}$ as follows. \\
	$\mathrm{1}$.  If $p=1$,
	\begin{equation*}
		R_1\{\bm{c}^k\}=	\limsup\limits_{k\to\infty}\Vert\bm{c}^k-\bm{c}^*\Vert^{\frac{1}{k}}\leq\limsup\limits_{k\to\infty}Lr\left( \frac{1}{2}\right)^{\frac{3^k}{k}}=0.\\
	\end{equation*}
	$2$. If $1<p<3$,
	\begin{equation*}
		R_p\{\bm{c}^k\}=	\limsup\limits_{k\to\infty}\Vert\bm{c}^k-\bm{c}^*\Vert^{\frac{1}{p^k}}\leq\limsup\limits_{k\to\infty}Lr\left( \frac{1}{2} \right)^{\frac{3^k}{p^k}}=0.\\
	\end{equation*}
	$3$. If $p \geq 3$,
	\begin{equation*}
		R_p\{\bm{c}^k\}=	\limsup\limits_{k\to\infty}\Vert\bm{c}^k-\bm{c}^*\Vert^{\frac{1}{p^k}}\leq\limsup\limits_{k\to\infty}(Lr)^{\frac{1}{p^k}}\left( \frac{1}{2}\right)^{\frac{3^k}{p^k}}= 1.\\
	\end{equation*}
	Then, according to Definition \ref{defroot}, one sees that $O_R\{\bm{c}^*\}\geq 3$ and so the proof is complete.
\end{proof}

\section{Numerical tests}\label{section4}
\noindent In this section, we implement \autoref{improved algorithm Ulm} proposed in Section s on an example which has  been presented in \cite{ulm dist}. For comparison, we also test in this example some other  existing algorithms for the ISVP \eqref{iep}, including \autoref{algorithm Ulm} and the two-step inexact Newton method in \cite{2 inexact newton} (for short, Algorithm TIN).
It should be noted that all numerical tests are  performed in MATLAB R2017b on a laptop equipped with an AMD Ryzen 5 4600H CPU (3.0 GHz). Furthermore, all linear systems are solved by the QMR function provided in MATLAB. The iterations of all algorithms are terminated when $k\geq50$ or the residual $$d_k:=	{\Vert U_k^TA(\bm{c}^k)V_k-\Sigma^*\Vert }_F$$ satisfies
$ d_k  \leq {10}^{-10}$.

\begin{example} 
We first generate the matrices $\left\{A_i\right\}_{i=0}^n\subset\mathbb{R}^{m\times n}$ using the \textnormal{rand} function in MATLAB. For demonstration purpose, we  also generate $\bm{c}^*$ by the \textbf{rand} function and compute the singular values of $A(\bm{c}^*)$ as the given singular values $\left\{\sigma_i^*\right\}_{i=1}^n$.
\end{example}

As in \cite{ulm dist}, we focus on the following three cases: $(a)\ m=100,\ n=60;\ (b)\ m=300, \ n=120;\ (c)\ m=600,\ n=300.$ Since \autoref{improved algorithm Ulm} converges locally, the initial guess $\bm{c}^0$ is generated by perturbing each entry of $\bm{c}^*$ with a uniformly distributed random value drawn from the interval $[ -\max\limits_{j} |c^{*}_{j}| \cdot \beta,\ \max\limits_{j} |c^{*}_{j}| \cdot \beta ]$.
Moreover, the matrix $B_0$ in \autoref{improved algorithm Ulm} is chosen to satisfy \eqref{I-BoJo}.
 Tables \ref{casea}--\ref{casec} present the values of $d_k$  respectively for  Cases ($a$)--($c$) generated by  \autoref{improved algorithm Ulm} with different values of $\beta$ and $\mu$.
From Tables \ref{casea}--\ref{casec}, we observe that it takes $2$ to $4$ steps for the convergence of \autoref{improved algorithm Ulm}.
Moreover, the smaller the values of $\mu$ and $\beta$, the faster the algorithm converges. This implies that an appropriate $B_0$ and initial point $\bm{c}^0$ are important for less iterations. Specifically, the convergence performance of \autoref{improved algorithm Ulm} with $\mu \leq 0.05$ is comparable to that of $\mu = 0.$


\begin{table}[!htbp]
	\centering
	\caption{the values of $d_k$ generated by \autoref{improved algorithm Ulm} for Case $(a)$ }
	\label{casea}
	\resizebox{\textwidth}{!}{ 
		\begin{tabular}{ccccccc}
			\hline
			{$\beta$} & {$k$} &{$\mu=0$} & {$\mu=0.001$} & {$\mu=0.005$} & {$\mu=0.01$} &{$\mu=0.05$}\\
			\hline
			$1e-3$ & $0$ & $7.38e-1$ & $7.38e-1$   & $7.38e-1$     & $7.38e-1$   & $7.38e-1$\\
			& $1$ & $3.56e-3$ & $2.73e-3$   & $5.69e-3$     & $9.65e-2$   & $6.33e-2$\\
			& $2$ & $7.89e-7$ & $6.59e-7$   & $7.62e-7$     & $3.32e-6$   & $2.59e-6$ \\
			& $3$ & $1.87e-14$& $2.15e-14$  & $2.33e-14$    & $2.74e-13$  & $1.58e-13$ \\
			& $\vdots$ & $\vdots$  & $\vdots$    &$\vdots$             &  $\vdots$		    &     $\vdots$ \\
			
			$1e-4$ & $0$ & $8.03e-2$ & $8.03e-2$   & $8.03e-2$     & $8.03e-2$   & $8.03e-2$ \\
			& $1$ & $3.46e-7$ & $9.31e-6$   & $1.52e-6$     & $1.65e-3$   & $2.02e-3$ \\
			& $2$ & $1.62e-14$& $1.89e-13$  & $4.38e-13$    & $4.32e-7$   & $5.59e-7$  \\
			& $3$ &   $\vdots$      &$\vdots$           &  $\vdots$           & $1.74e-14$  & $1.06e-14$\\
			& $\vdots$ &$\vdots$  &$\vdots$   & $\vdots$   &  $\vdots$   &$\vdots$\\
			\hline
		\end{tabular}
	}
\end{table}

\begin{table}[!htbp]
	\centering
	\caption{the values of $d_k$ generated by \autoref{improved algorithm Ulm} for Case $(b)$}
	\label{caseb}
	\resizebox{\textwidth}{!}{
		\begin{tabular}{ccccccc}
			\hline
			{$\beta$} & {$k$} &{$\mu=0$} & {$\mu=0.001$} & {$\mu=0.005$} & {$\mu=0.01$} &{$\mu=0.05$}\\
			\hline
			$1e-3$ & $0$  & $3.61e+0$   & $3.61e+0$     & $3.61e+0$   & $3.61e+0$  & $3.61e+0$ \\
			& $1$  & $8.21e-3$   & $9.22e-3$     & $3.29e-1$   & $5.12e-1$  & $3.25e-1$ \\
			& $2$  & $4.23e-6$   & $6.26e-6$     & $7.21e-3$   & $8.15e-3$  & $7.11e-3$ \\
			& $3$  & $7.10e-13$  & $6.31e-13$    & $3.22e-6$   & $5.12e-6$  & $2.24e-6$ \\
			& $4$  &  $\vdots$   &   $\vdots$    & $1.98e-13$  & $4.57e-13$ & $3.21e-13$ \\
			& $\vdots$ &  $\vdots$  & $\vdots$      & $\vdots$    &$\vdots$    &  $\vdots$	\\
			
			$1e-4$ & $0$  & $8.69e-1$   & $8.69e-1$    & $8.69e-1$   & $8.69e-1$   & $8.69e-1$\\
			& $1$  & $6.35e-3$   & $5.51e-3$    & $3.22e-3$   & $9.18e-3$   & $8.33e-3$ \\
			& $2$  & $7.03e-7$   & $6.22e-7$    & $6.83e-7$   & $5.11e-6$   & $3.59e-6$ \\
			& $3$  & $1.32e-14$  & $2.62e-14$   & $1.33e-14$  & $2.26e-13$  & $1.14e-13$\\
			& $\vdots$  &	$\vdots$   &  $\vdots$    &    $\vdots$  &  $\vdots$   &       $\vdots$ \\
			\hline
		\end{tabular}
	}
\end{table}
\begin{table}[!htbp]
	\centering
	\caption{the values of $d_k$ generated by \autoref{improved algorithm Ulm} for Case $(c)$}
	\label{casec}
	\resizebox{\textwidth}{!}{
		\begin{tabular}{ccccccc}
			\hline
			{$\beta$} & {$k$} &{$\mu=0$} & {$\mu=0.001$} & {$\mu=0.005$} & {$\mu=0.01$} &{$\mu=0.05$}\\
			\hline
			$1e-4$ & $0$  & $5.54e-1$  & $5.54e-1$    & $5.54e-1$  &$5.54e-1$    &$5.54e-1$ 	\\
			& $1$  & $7.32e-3$  & $8.32e-3$    & $8.03e-3$	&$7.21e-3$    &$8.97e-3$ \\
			& $2$  & $8.32e-7$  & $7.11e-7$    & $6.99e-7$ 	&$3.54e-6$    &$6.07e-6$\\
			& $3$  & $1.12e-14$ & $2.69e-14$   & $2.68e-14$ &$7.66e-13$   &$7.24e-13$  \\
			& $\vdots$  &  $\vdots$  &  $\vdots$    &  $\vdots$	&    $\vdots$  &  	$\vdots$\\
			
			$1e-5$ & $0$  & $7.22e-2$  & $7.22e-2$    &$7.22e-2$   &$7.22e-2$     &$7.22e-2$ \\
			& $1$  & $6.98e-5$  & $8.51e-5$    & $3.63e-5$ 	&$2.64e-3$     &$2.96e-3$\\
			& $2$  & $5.61e-14$ & $7.52e-14$   & $1.91e-13$ &$8.48e-7$     &$6.54e-7$ \\
			& $3$  & $\vdots$   &$\vdots$      & $\vdots$ 	&$1.98e-14$	 &$4.02e-14$ \\
			&$\vdots$ &$\vdots$  &$\vdots$   &  $\vdots$  & $\vdots$    &  $\vdots$ \\
			\hline
		\end{tabular}
	}
\end{table}
For comparison, we also conducted experiments for \autoref{algorithm Ulm}, \autoref{improved algorithm Ulm} and  Algorithm TIN with ten initial points for each problem size, where $\beta$ is set to be $1e-3$, $1e-4$ and $1e-5$ respectively for Cases (a)--(c). Particularly, we set $\mu=0$ in \autoref{improved algorithm Ulm} for simplicity.
Table \ref{tab:residualsc} presents the numerical experimental results of all algorithms with the initial points ensuring convergence,
where $\textbf{Ite.}$ and  $\textbf{Tim.}$ denote the averaged outer iteration number and  CPU time (in seconds), respectively. While Table \ref{table:stability} presents one of the numerical results of  Case (c), where $\textbf{condJ}$ denotes the condition number of $J_k$.
As shown in Table \ref{tab:residualsc}, the proposed \autoref{improved algorithm Ulm} demonstrates superior computational efficiency, requiring less CPU time to achieve convergence compared to other algorithms.
This performance advantage of \autoref{improved algorithm Ulm} may stem from the elimination‌‌
of the Cayley transformation (and so the solution of $2(m+n)$ linear systems) at each iteration.
Moreover, Table \ref{table:stability} reveals that when the condition numbers of approximate Jacobian matrices become large, both \autoref{algorithm Ulm} and  \autoref{improved algorithm Ulm} demonstrate significantly better performance than Algorithm TIN.  This may be due to the fact that, compared to Algorithm TIN, \autoref{algorithm Ulm} and \autoref{improved algorithm Ulm} do not require solving the approximate Jacobian equation at each iteration.
Consequently, under high condition number of $J_k$, \autoref{algorithm Ulm} and \autoref{improved algorithm Ulm} exhibit  greater stability than Algorithm TIN.

\begin{table}[!htbp]
	\centering
	\caption{The averaged outer iteration number and CPU time }
	\label{tab:residualsc}
	\begin{tabular}{ccccccc}
		\toprule
		\multirow{2}{*}{Case} & \multicolumn{2}{c}{\autoref{algorithm Ulm}} & \multicolumn{2}{c}{\autoref{improved algorithm Ulm}} & \multicolumn{2}{c}{Algorithm TIN} \\
		\cmidrule(lr){2-3} \cmidrule(lr){4-5} \cmidrule(lr){6-7}
		& $\textbf{Tim.}$ & $\textbf{Ite.}$  & $\textbf{Tim.}$ & $\textbf{Ite.}$ & $\textbf{Tim.}$ & $\textbf{Ite.}$  \\
		\midrule
		$(a)$  & $0.47$     & $3.20$     & $0.36$   & $3.20$     & $0.48$  & $3.20$   \\
		\addlinespace[3pt] 
		$(b)$  & $8.51$     & $3.10$     & $7.52$   & $3.10$     & $8.72$  & $3.10$   \\
		\addlinespace[3pt] 
		$(c)$  & $266.58$   & $2.50$     & $241.02$ & $2.50$     & $271.71$& $2.50$    \\
		\bottomrule
	\end{tabular}
\end{table}

\begin{table}[!htbp]
    \centering
    \caption{the values of $d_k$ generated by all algorithms for Case (c)}
    \label{table:stability}
    \begin{tabular}{ccccccc}
        \toprule
        \multirow{2}{*}{$k$} & \multicolumn{2}{c}{\autoref{algorithm Ulm}} & \multicolumn{2}{c}{\autoref{improved algorithm Ulm}} & \multicolumn{2}{c}{Algorithm TIN} \\
        \cmidrule(lr){2-3} \cmidrule(lr){4-5} \cmidrule(lr){6-7}
        & {$d_k$} & {$\textbf{condJ}$}  & {$d_k$} & {$\textbf{condJ}$}  & {$d_k$} & {$\textbf{condJ}$}  \\
        \midrule
        $0$  & $8.59e-2$   & $8.11e+6$     & $8.59e-2$   & $8.11e+6$      & $8.59e-2$  & $8.11e+6$    \\
        $1$  & $2.11e-7$   & $2.21e+7$     & $3.29e-7$   & $2.21e+7$      & $7.51e-2$  & $2.21e+7$    \\
        $2$  & $3.71e-11$  & $\vdots$      & $2.59e-11$  & $\vdots$       & $4.91e-3$  & $2.29e+7$    \\
        $3$  & $\vdots$    & $\vdots$      & $\vdots$    & $\vdots$       & $3.65e-3$  & $2.37e+7$    \\
        $\vdots$  & $\vdots$   & $\vdots$     & $\vdots$    & $\vdots$       & $\vdots$   & $\vdots$     \\
        $50$ & $\vdots$    & $\vdots$      & $\vdots$    & $\vdots$       & $9.18e-3$  & $2.42e+7$    \\
        \bottomrule
    \end{tabular}
\end{table}

\section*{Declarations}
\begin{competing}
The authors declare no competing interests.
\end{competing}

\begin{ethics}
Not applicable.
\end{ethics}

\begin{consent}
All authors agreed to publish.
\end{consent}
\begin{constforpar}
All authors participated.
\end{constforpar}

\begin{data}
No datasets were generated or analysed during the current study.
\end{data}

\begin{materials}
Not applicable.
\end{materials}

\begin{code}
Not applicable.
\end{code}

\begin{authorcon}
All authors contributed to writing, review and editing. All authors
approved the final manuscript.
\end{authorcon}

\end{document}